\begin{document}
	



\newfont{\teneufm}{eufm10}
\newfont{\seveneufm}{eufm7}
\newfont{\fiveeufm}{eufm5}
%
%
\newfam\eufmfam
\textfont\eufmfam=\teneufm \scriptfont\eufmfam=\seveneufm
\scriptscriptfont\eufmfam=\fiveeufm
%
%
\def\frak#1{{\fam\eufmfam\relax#1}}
%


\def\bbbr{{\rm I\!R}} 
\def\bbbm{{\rm I\!M}}
\def\bbbn{{\rm I\!N}} 
\def\bbbf{{\rm I\!F}}
\def\bbbh{{\rm I\!H}}
\def\bbbk{{\rm I\!K}}
\def\bbbp{{\rm I\!P}}
\def\bbbone{{\mathchoice {\rm 1\mskip-4mu l} {\rm 1\mskip-4mu l}
		{\rm 1\mskip-4.5mu l} {\rm 1\mskip-5mu l}}}
\def\bbbc{{\mathchoice {\setbox0=\hbox{$\displaystyle\rm C$}\hbox{\hbox
				to0pt{\kern0.4\wd0\vrule height0.9\ht0\hss}\box0}}
		{\setbox0=\hbox{$\textstyle\rm C$}\hbox{\hbox
				to0pt{\kern0.4\wd0\vrule height0.9\ht0\hss}\box0}}
		{\setbox0=\hbox{$\scriptstyle\rm C$}\hbox{\hbox
				to0pt{\kern0.4\wd0\vrule height0.9\ht0\hss}\box0}}
		{\setbox0=\hbox{$\scriptscriptstyle\rm C$}\hbox{\hbox
				to0pt{\kern0.4\wd0\vrule height0.9\ht0\hss}\box0}}}}
\def\bbbq{{\mathchoice {\setbox0=\hbox{$\displaystyle\rm
				Q$}\hbox{\raise
				0.15\ht0\hbox to0pt{\kern0.4\wd0\vrule height0.8\ht0\hss}\box0}}
		{\setbox0=\hbox{$\textstyle\rm Q$}\hbox{\raise
				0.15\ht0\hbox to0pt{\kern0.4\wd0\vrule height0.8\ht0\hss}\box0}}
		{\setbox0=\hbox{$\scriptstyle\rm Q$}\hbox{\raise
				0.15\ht0\hbox to0pt{\kern0.4\wd0\vrule height0.7\ht0\hss}\box0}}
		{\setbox0=\hbox{$\scriptscriptstyle\rm Q$}\hbox{\raise
				0.15\ht0\hbox to0pt{\kern0.4\wd0\vrule height0.7\ht0\hss}\box0}}}}
\def\bbbt{{\mathchoice {\setbox0=\hbox{$\displaystyle\rm
				T$}\hbox{\hbox to0pt{\kern0.3\wd0\vrule height0.9\ht0\hss}\box0}}
		{\setbox0=\hbox{$\textstyle\rm T$}\hbox{\hbox
				to0pt{\kern0.3\wd0\vrule height0.9\ht0\hss}\box0}}
		{\setbox0=\hbox{$\scriptstyle\rm T$}\hbox{\hbox
				to0pt{\kern0.3\wd0\vrule height0.9\ht0\hss}\box0}}
		{\setbox0=\hbox{$\scriptscriptstyle\rm T$}\hbox{\hbox
				to0pt{\kern0.3\wd0\vrule height0.9\ht0\hss}\box0}}}}
\def\bbbs{{\mathchoice
		{\setbox0=\hbox{$\displaystyle     \rm S$}\hbox{\raise0.5\ht0\hbox
				to0pt{\kern0.35\wd0\vrule height0.45\ht0\hss}\hbox
				to0pt{\kern0.55\wd0\vrule height0.5\ht0\hss}\box0}}
		{\setbox0=\hbox{$\textstyle        \rm S$}\hbox{\raise0.5\ht0\hbox
				to0pt{\kern0.35\wd0\vrule height0.45\ht0\hss}\hbox
				to0pt{\kern0.55\wd0\vrule height0.5\ht0\hss}\box0}}
		{\setbox0=\hbox{$\scriptstyle      \rm S$}\hbox{\raise0.5\ht0\hbox
				to0pt{\kern0.35\wd0\vrule height0.45\ht0\hss}\raise0.05\ht0\hbox
				to0pt{\kern0.5\wd0\vrule height0.45\ht0\hss}\box0}}
		{\setbox0=\hbox{$\scriptscriptstyle\rm S$}\hbox{\raise0.5\ht0\hbox
				to0pt{\kern0.4\wd0\vrule height0.45\ht0\hss}\raise0.05\ht0\hbox
				to0pt{\kern0.55\wd0\vrule height0.45\ht0\hss}\box0}}}}
\def\bbbz{{\mathchoice {\hbox{$\sf\textstyle Z\kern-0.4em Z$}}
		{\hbox{$\sf\textstyle Z\kern-0.4em Z$}}
		{\hbox{$\sf\scriptstyle Z\kern-0.3em Z$}}
		{\hbox{$\sf\scriptscriptstyle Z\kern-0.2em Z$}}}}
\def\ts{\thinspace}

\newtheorem{thm}{Theorem}
\newtheorem{lem}{Lemma}
\newtheorem{lemma}[thm]{Lemma}
\newtheorem{prop}{Proposition}
\newtheorem{proposition}[thm]{Proposition}
\newtheorem{theorem}[thm]{Theorem}
\newtheorem{cor}[thm]{Corollary}
\newtheorem{corollary}[thm]{Corollary}
\newtheorem*{thank}{\ \ \ \bf Acknowledgment}
\newtheorem{prob}{Problem}
\newtheorem{problem}[prob]{Problem}
\newtheorem*{ques}{Question}
\newtheorem*{rem}{Remarks}

\numberwithin{equation}{section}
\numberwithin{thm}{section}

\def\squareforqed{\hbox{\rlap{$\sqcap$}$\sqcup$}}
\def\qed{\ifmmode\squareforqed\else{\unskip\nobreak\hfil
		\penalty50\hskip1em\null\nobreak\hfil\squareforqed
		\parfillskip=0pt\finalhyphendemerits=0\endgraf}\fi}

\def\cA{{\mathcal A}}
\def\cB{{\mathcal B}}
\def\cC{{\mathcal C}}
\def\cD{{\mathcal D}}
\def\cE{{\mathcal E}}
\def\cF{{\mathcal F}}
\def\cG{{\mathcal G}}
\def\cH{{\mathcal H}}
\def\cI{{\mathcal I}}
\def\cJ{{\mathcal J}}
\def\cK{{\mathcal K}}
\def\cL{{\mathcal L}}
\def\cM{{\mathcal M}}
\def\cN{{\mathcal N}}
\def\cO{{\mathcal O}}
\def\cP{{\mathcal P}}
\def\cQ{{\mathcal Q}}
\def\cR{{\mathcal R}}
\def\cS{{\mathcal S}}
\def\cT{{\mathcal T}}
\def\cU{{\mathcal U}}
\def\cV{{\mathcal V}}
\def\cW{{\mathcal W}}
\def\cX{{\mathcal X}}
\def\cY{{\mathcal Y}}
\def\cZ{{\mathcal Z}}
\def\pgdc{\textrm{gcd}}
\newcommand{\rmod}[1]{\: \mbox{mod} \: #1}

\def\Nm{{\mathrm{Nm}}}

\def\Tr{{\mathrm{Tr}}}

\def\epp{\mathbf{e}_{p-1}}

\def\ind{\mathop{\mathrm{ind}}}

\def\mand{\qquad \mbox{and} \qquad}

\newcommand*{\QEDA}{\hfill\ensuremath{\blacksquare}}

\newcommand{\commH}[1]{\marginpar{%
		\begin{color}{magenta}
			\vskip-\baselineskip 
			\raggedright\footnotesize
			\itshape\hrule \smallskip H: #1\par\smallskip\hrule\end{color}}}

\newcommand{\commI}[1]{\marginpar{%
		\begin{color}{blue}
			\vskip-\baselineskip 
			\raggedright\footnotesize
			\itshape\hrule \smallskip I: #1\par\smallskip\hrule\end{color}}}



\newcommand{\ignore}[1]{}

\hyphenation{re-pub-lished}

\parskip 1.5 mm
\parindent 8 pt

\def\GL{\operatorname{GL}}
\def\SL{\operatorname{SL}}
\def\PGL{\operatorname{PGL}}
\def\PSL{\operatorname{PSL}}
\def\li{\operatorname{li}}

\def\vec#1{\mathbf{#1}}

\def \F{{\mathbb F}}
\def \E{{\mathbb E}}
\def \K{{\mathbb K}}
\def \Z{{\mathbb Z}}
\def \N{{\mathbb N}}
\def \Q{{\mathbb Q}}
\def \T{{\mathbb T}}
\def \C {{\mathbb C}}
\def \I {{\mathbbm 1 }}
\def \R{{\mathbb R}}
\def\Fp{\F_p}
\def \fp{\Fp^*}

\def \Rc{{\mathcal R}}
\def \Qc{{\mathcal Q}}
\def \Ec{{\mathcal E}}

\def \DN{D_N}
\def\va{\mbox{\bf{a}}}

\def\Kc{\,{\mathcal K}}
\def\Ic{\,{\mathcal I}}

\def\\{\cr}
\def\({\left(}
\def\){\right)}
\def\fl#1{\left\lfloor#1\right\rfloor}
\def\rf#1{\left\lceil#1\right\rceil}

\def\Ln#1{\mbox{\rm {Ln}}\,#1}

\def \nd {\, | \hspace{-1.2mm}/\,}

\def\e{\mathbf{e}}

\def\ep{\mathbf{e}_p}
\def\eq{\mathbf{e}_q}

\def\wt#1{\mbox{\rm {wt}}\,#1}

\def\Mob{M{\"o}bius }

\newcommand{\tend}[3][]{\xrightarrow[#2\to#3]{#1}}
\newcommand{\spdj}{\perp_{\mbox{\scriptsize sp}}}
\newcommand{\egdef}{:=}
\newcommand{\Supp}{\Sigma}
\newcommand{\setdef}{\stackrel {\rm {def}}{=}}
\newcommand{\spa}{\textrm{span}}
\newcommand{\ds}{\displaystyle}
\newcommand{\script}{\scriptstyle}

\newcommand*{\Scale}[2][4]{\scalebox{#1}{$#2$}}%
\newcommand*{\Resize}[2]{\resizebox{#1}{!}{$#2$}}
\newcommand*{\Sup}{\Resize{0.7cm}{sup}}


\title[with oscillation method.]{Simple proof of Bourgain bilinear ergodic theorem and its extension to polynomials and polynomials in primes}

\author[E. H. El Abdalaoui]{El Houcein El Abdalaoui}
\address{University of Rouen Normandy,
	LMRS UMR  6085 CNRS-UNIV, 
	F76801 Saint-{\'E}tienne-du-Rouvray, France}
\email{elhoucein.elabdalaoui@univ-rouen.fr}

\date{\today}
\subjclass[2010]{Primary: 37A30;  Secondary: 28D05, 5D10, 11B30, 11N37, 37A45}

\pagenumbering{arabic}
\begin{abstract} We first present a modern simple  proof of the classical ergodic Birkhoff's theorem and Bourgain's homogeneous bilinear ergodic theorem. This proof used the simple fact that the shift map on integers has a simple Lebesgue spectrum. As a consequence, we establish that the homogeneous bilinear ergodic averages along polynomials and polynomials in primes converge almost everywhere, that is, for any invertible measure preserving transformation $T$, acting on a probability space $(X, \cB, \mu)$, for any $f \in L^r(X,\mu)$ , $g \in L^{r'}(X,\mu)$ such that $\frac{1}{r}+\frac{1}{r'}= 1$, for any non-constant polynomials $P(n),Q(n), n \in \Z$, taking integer values, and for almost all $x \in X$, we have,
	$$\frac{1}{N}\sum_{n=1}^{N}f(T^{P(n)}x) g(T^{Q(n)}x),$$
and 
$$\frac{1}{\pi_N}\sum_{\overset{p \leq N}{p\textrm{~~prime}}}f(T^{P(p)}x) g(T^{Q(p)}x),$$	
converge. Here $\pi_N$ is the number of prime in $[1,N]$. 
\end{abstract}

\maketitle

\epigraph{What is hardest of all? That which seems most simple: to see with your eyes what is before your eyes.}{\textit{ Goethe }}
\section{Introduction} 

The classical ergodic theorem has many proofs which use in some sense of-course the classical ergodic maximal inequality. The known elegant proof of the ergodic maximal inequality is due to Garcia \cite{Garcia}. This proof is reproduced by almost all authors in any introduction book on ergodic theory. There is also an ``Easy and nearly simultaneous proofs of the Ergodic Theorem and Maximal Ergodic Theorem" established by  M. Keane and K. Petersen \cite{KP}. Moreover, using the non standard ideas of Kamae to proof ergodic theorem, Y. Katznelson and B. Weiss produced a combinatorics proof of it \cite{KW}. In \cite[Theorem 10.28, p.110]{Nadkarni1}, M. G. Nadkarni gives a measure free proof of Birkhoff's theorem, and state a descriptive version of Rhoklin lemma.

Here,  our aim is to produce a simple proof of Birkhoff theorem using the oscillating method. We will further produce a simple proof of \linebreak Bourgain bilinear ergodic theorem \cite{Bourgain-D}. This later theorem gives an affirmative answer to the question raised by H. Furstenberg \cite[Question 1. p. 96]{Fbook}. Precisely, it assert that the homogeneous bilinear ergodic average converge almost everywhere. For a finitary simple proof of it, we refer to \cite{elabdal}. Subsequently, we will extend Bourgain homogeneous bilinear ergodic theorem to polynomials and polynomials in primes.

Obviously, Bourgain bilinear ergodic theorem is a generalization of Birkhoff theorem. But, as we will see, the proof of it depend heavily on Birkhoff theorem.

Let us point out also that the Birkhoff ergodic theorem and the Hopf ergodic maximal inequality are equivalent. For more details, we refer to \cite[Chap.~1]{Garcia} and the references therein.
      
\section{Set-Up and tools} 

Let $(X,\cA,T,\mu)$ a dynamical system, that is, $(X,\cA,\mu)$ is a probability space and $T$ is a measure preserving transformation. $T$ is said to be ergodic if the measure of any invariant set $A$ (i.e. $\mu(A \Delta T^{-1}(A))=0$) is $0$ or $1$. It is well-know that the reduction to the ergodic case can be used by applying the ergodic decomposition. Therefore, in many cases, it is suffices to study the ergodic case. We denote by $L^2(X,\mu)$ the space of square integrable functions and by $\cI$  the $\sigma$-algebra of invariant sets.

In this setting, Birkhoff ergodic theorem assert that for any $f \in L^1(X,\mu)$, for almost all $x \in X$, we have
\begin{eqnarray}\label{BK}
\frac{1}{N}\sum_{1}^{N} f(T^nx) \tend{n}{+\infty}\E(f|\cI).
\end{eqnarray}

The Bourgain bilinear ergodic theorem say that for any $f,g \in L^2(X,\mu)$, for almost all $x \in X$,  for any $a, b \in \Z$,  

\begin{equation}\label{BD}
	\frac{1}{N}\sum_{n=1}^{N}f(T^{an}x) g(T^{bn}x) {\textrm{~~~~converge. }}
\end{equation}

The prime ergodic theorem established by Bourgain \cite{B3} assert that for any $f \in L^2(X)$, for almost all $x \in X$,
\begin{equation}\label{BP}
\frac{1}{\pi_N}\sum_{\overset{p \leq N}{p\textrm{~prime}}}f(T^{p}x)  {\textrm{~~~~converge.}}
\end{equation}

Bourgain prime ergodic theorem \eqref{BP} was strengthened to $L^r$, $r>1$, by Wierdl. But, as pointed out by Mirek and Trojan \cite{MirekT}, the reader should be made aware that there is a technical gap in \cite[p.331]{Wierdl}. Therefore, the approach of Wierdl should be combined with that of Mirek-Trojan \cite{MirekT} or Cuny-Weber \cite{Cuny-W}. Later, Nair extended \eqref{BP} \cite{Nair} by proving that for any $r>1$, for any non-constant polynomial $Q$ mapping the naturel numbers to themseleves, for any $f \in L^r(X,\mu)$,

\begin{equation}\label{NP}
\frac{1}{\pi_N}\sum_{\overset{p \leq N}{p\textrm{~prime}}}f(T^{Q(p)}x)  
\end{equation}
converge almost surely in $x$ with respect to $\mu$.

 Here, we will prove the following theorem.
\begin{thm}\label{BPPET}Let $P(n), Q(n), n \in \Z$ be a non-constant polynomials taking integer values. Let $(X,\cA,T,\mu)$ be a measure preserving dynamical system. Then,  for any $f \in L^r(X,\mu), g \in L^{r'}(X,\mu)$ such that $ \frac{1}{r}+\frac{1}{r'}=1$,  the bilinear averages 
$$\frac{1}{N}\sum_{n=1}^{N}f(T^{P(n)}x) g(T^{Q(n)}x),$$
and the prime bilinear averages 
$$\frac{1}{\pi_N}\sum_{\overset{p \leq N}{p\textrm{~~prime}}}f(T^{P(p)}x) g(T^{Q(p)}x),$$	
converge almost everywhere in $x$ with respect to $\mu$.
\end{thm}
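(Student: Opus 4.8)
The plan is to deduce the theorem from the two facts already in hand---Birkhoff's theorem \eqref{BK} and Bourgain's \emph{linear} bilinear theorem \eqref{BD}---by pushing the oscillation method one step further. Writing $B_N(f,g)(x)=\frac1N\sum_{n=1}^N f(T^{P(n)}x)\,g(T^{Q(n)}x)$ and $B_N^{\mathrm{pr}}(f,g)(x)=\frac{1}{\pi_N}\sum_{p\le N}f(T^{P(p)}x)\,g(T^{Q(p)}x)$, I would first reduce the two almost everywhere convergence statements to \emph{uniform oscillation inequalities} for these bilinear operators. Indeed, a family of operators converges almost everywhere as soon as one controls, uniformly over every increasing sequence of scales $N_1<N_2<\cdots<N_{J+1}$, the oscillation seminorm built from $\sup_{N_j\le N<N_{j+1}}|B_N-B_{N_j}|$, with a bound growing slower than $J^{1/2}$; this is the abstract engine behind the oscillation method, and it is what replaces a single maximal inequality.

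By Calder\'on's transference principle it suffices to prove such an oscillation inequality on the integer shift $(\Z,\,n\mapsto n+1)$, and here the simple Lebesgue spectrum exploited in the proofs of \eqref{BK} and \eqref{BD} gives an explicit spectral model: the averages become convolution (Fourier multiplier) operators, and the whole question is reduced to quantitative estimates for the bilinear exponential kernels $\frac1N\sum_{n\le N}e^{2\pi i(P(n)\xi+Q(n)\eta)}$. I would then run the Hardy--Littlewood circle method on the pair of frequencies $(\xi,\eta)$. On the \emph{minor arcs}, Weyl/van der Corput type estimates for these bilinear polynomial sums produce genuine decay, so the corresponding part of the oscillation is summable; on the \emph{major arcs}, the kernel is, after extracting the rational Gauss-type factors, well approximated by a superposition of \emph{modulated linear} bilinear averages, whose oscillation is controlled by \eqref{BD} together with \eqref{BK}. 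Assembling the two contributions yields the oscillation bound for $B_N$, hence the first convergence statement for $f\in L^r,\ g\in L^{r'}$.

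For the prime averages I would transfer the problem from primes to integers in the now-standard way: insert the von Mangoldt weighting, apply Vaughan's identity to split the sum over primes into ``type I'' and ``type II'' pieces, and use the prime number theorem in arithmetic progressions on the major arcs together with minor-arc cancellation for exponential sums over primes (this is exactly the circle-method input of Bourgain, Wierdl and Nair, corrected as indicated by Mirek--Trojan and Cuny--Weber). This identifies $B_N^{\mathrm{pr}}$ with $B_N$ up to error terms whose oscillation is negligible, so the second statement follows from the first.

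The main obstacle is the \emph{bilinear} minor-arc estimate. In the single-function polynomial theorem one has a clean $L^2$ spectral bound, but here the two spectral parameters $\xi$ and $\eta$ interact and must be made to cancel \emph{jointly}; controlling $\frac1N\sum_{n\le N}e^{2\pi i(P(n)\xi+Q(n)\eta)}$ uniformly in $N$ therefore requires a van der Corput / PET-style descent on the exponential sums that keeps track of both frequencies at once. Two further points demand care: the estimates must be uniform across scales so that they feed the \emph{oscillation} seminorm and not merely a maximal function, and they must hold throughout the full duality range $L^r\times L^{r'}$ rather than only at $L^2\times L^2$, which forces interpolation against the maximal bounds supplied by Birkhoff's theorem.
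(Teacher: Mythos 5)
Your skeleton (oscillation seminorms, Calder\'on transference to the shift on $\Z$, circle method, interpolation) matches the paper's, but there is a genuine gap at the technical core, and it sits exactly where you yourself flag ``the main obstacle.'' You propose to analyze the two-frequency bilinear kernel $\frac1N\sum_{n\le N}e^{2\pi i(P(n)\xi+Q(n)\eta)}$ by a joint major/minor-arc decomposition in $(\xi,\eta)$, and to control the minor arcs by a van der Corput/PET descent ``that keeps track of both frequencies at once.'' This step is never carried out, and it is not a routine extension of the linear theory: a pointwise bound on a bilinear multiplier, uniform in $N$, does not convert into a bound for the bilinear maximal or oscillation \emph{operator} (there is no bilinear analogue of the simple $L^2$ multiplier-to-maximal passage used in the linear case), and genuinely two-frequency minor-arc analysis feeding an oscillation seminorm is precisely the kind of machinery that is far beyond elementary methods. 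Your appeal to \eqref{BD} and \eqref{BK} on the major arcs also cannot supply the needed polynomial estimates, since \eqref{BD} concerns linear orbits $an,bn$ only.

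The paper's proof avoids every two-frequency estimate by Bourgain's decoupling identity (Equation (2.15) of \cite{Bourgain-D}, reproduced as \eqref{Bequa:1} and \eqref{Bbppequa:1}): expanding only $f$ in Fourier series,
$$\frac1{N}\sum_{n=1}^{N} f(x+P(n))\,g(x+Q(n))=\frac{1}{2\pi}\int_{-\pi}^{\pi} \widehat{f}(\theta)\,e^{ix\theta}\Big(\frac1{N}\sum_{n=1}^{N}e^{iP(n)\theta}g(x+Q(n))\Big)\,d\theta ,$$
so that, after Cauchy--Schwarz in $\theta$ and Parseval, the bilinear oscillation is dominated by $\|f\|_{\ell^2}$ times the oscillation of \emph{linear} polynomial averages of the modulated function, uniformly in the single parameter $\theta$. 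The only circle-method input is then the rotated linear oscillation and maximal inequalities \eqref{BPPET:OscPR}, Lemma \ref{POsc} and Lemma \ref{POmax} (cited from Bourgain, Nair, Thouvenot, and Mirek--Trojan/Cuny--Weber for primes), which are known; no bilinear exponential-sum estimate ever appears. The extension from $L^\infty\times L^\infty$ to $L^r\times L^{r'}$ then comes from the bilinear maximal inequality of Theorem \ref{MPPET}, proved by the same decoupling plus H\"older and bilinear interpolation --- not, as you suggest, from maximal bounds ``supplied by Birkhoff's theorem.'' To repair your argument you should replace the joint $(\xi,\eta)$ analysis by this one-frequency decoupling; as written, your plan leaves the decisive estimate unproved.
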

The proof of Theorem \ref{BPPET} is postponed to section \ref{ProofBPPET}. But, for the moment, let us point out that the most important result needed is the following strong maximal ergodic inequality,  which may be of independent interest. 
\begin{thm}\label{MPPET}Let $P(n), Q(n), n \in \Z$ be a non-constant polynomials taking integer values. Let $(X,\cA,T,\mu)$ be a measure preserving dynamical system. Then,  for any $f \in L^r(X,\mu), g \in L^{r'}(X,\mu)$ such that $\frac{1}{r}+\frac{1}{r'}=1$, we have
$$\Bigg\|\sup_{ N  \geq 1} \Big|\frac{1}{N} \sum_{n=1}^{N}f(T^{P(n)}x) g(T^{Q(n)}x)\Bigg\|_1 \leq C_{r} \big\|f\big\|_r \big\|f\big\|_{r'}.$$
\end{thm}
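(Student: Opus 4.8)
The plan is to dominate the bilinear maximal function by single polynomial ergodic maximal operators through Hölder's inequality, and then to confront the genuinely bilinear endpoint estimate that this reduction leaves behind. Concretely, write $A_N(f,g)(x)=\frac1N\sum_{n=1}^N f(T^{P(n)}x)\,g(T^{Q(n)}x)$ and let
\[
M_P h(x)=\sup_{N\ge1}\frac1N\sum_{n=1}^N |h|(T^{P(n)}x),
\]
the sublinear ergodic maximal operator along the orbit $\{P(n)\}$, with $M_Q$ defined analogously. Applying Hölder's inequality in the variable $n$ with the conjugate pair $(r,r')$ gives, for every $N$ and every $x$,
\[
|A_N(f,g)(x)|\le \big(M_P|f|^r(x)\big)^{1/r}\big(M_Q|g|^{r'}(x)\big)^{1/r'},
\]
so the same bound holds for $\sup_{N\ge1}|A_N(f,g)(x)|$. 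Taking the $L^1(\mu)$ norm and applying Hölder once more, this time in $x$ and again with exponents $(r,r')$, yields
\[
\big\|\sup_{N\ge1}|A_N(f,g)|\big\|_1\le \big\|M_P|f|^r\big\|_1^{1/r}\,\big\|M_Q|g|^{r'}\big\|_1^{1/r'}.
\]

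The catch is that the right-hand side would be controlled by $\|f\|_r\|g\|_{r'}$ only if one had the strong $(1,1)$ bound $\|M_P h\|_1\le C\|h\|_1$ for the single operators. This bound is \emph{false}: already for $P(n)=n$ the ergodic maximal operator is merely of weak type $(1,1)$, and for $\deg P\ge2$ it is bounded only on $L^p$ with $p>1$. Consequently the elementary argument above can reach only a weak-type conclusion — by Hölder in Lorentz spaces, $L^{r,\infty}\cdot L^{r',\infty}\hookrightarrow L^{1,\infty}$, it gives at best $\big\|\sup_N|A_N(f,g)|\big\|_{L^{1,\infty}}\lesssim \|f\|_r\|g\|_{r'}$. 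The strong $L^1$ estimate of the statement is therefore a genuinely bilinear phenomenon and does not follow from one-parameter maximal theory alone.

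To capture the strong endpoint I would pass, by the Calderón transference principle, to the model system in which $T$ is the bilateral shift on $\Z$; this is exactly where the simple Lebesgue spectrum of the shift is used, the shift being the universal system for such inequalities. On $\Z$ the averages $A_N$ are convolution operators whose multipliers are assembled from the exponential sums $\sum_{n\le N} e\!\big(\xi P(n)+\eta Q(n)\big)$, $e(t)=e^{2\pi i t}$, which I would dissect by the Hardy--Littlewood circle method. On the major arcs the multiplier is close to a smooth symbol, and the bilinear average is dominated by a continuous bilinear maximal function controlled by the bilinear maximal theorem (Lacey's theorem in the linear case, and its polynomial refinements), which does map $L^r\times L^{r'}\to L^1$ for $1<r,r'<\infty$. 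On the minor arcs Weyl--Vinogradov estimates force the exponential sums to decay, so those contributions are summable. Adding the two regimes gives the strong $L^1$ bound on $\Z$, which transfers back to the original system. For the prime averages I would run the identical scheme after inserting the von Mangoldt weight, replacing Weyl's inequality by Vinogradov's bound for exponential sums over primes and using the Mirek--Trojan/Cuny--Weber correction of Wierdl's estimate \cite{MirekT,Cuny-W,Wierdl} for the arithmetic factors on the major arcs.

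The difficulty is concentrated entirely in the strong endpoint: the two Hölder steps and the transference are routine, but the product-of-maximal-functions bound only produces weak type $(1,1)$, so the heart of the proof is the honest bilinear estimate — time-frequency control on the major arcs together with sharp minor-arc exponential-sum bounds for $P,Q$ and their analogues over primes.
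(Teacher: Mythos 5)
Your opening reduction and your rejection of it are both sound: H\"older in $n$ followed by H\"older in $x$ cannot give the strong bound, and in fact for $\deg P\ge 2$ it gives even less than you concede, since the linear polynomial maximal operator has no $L^1$ endpoint at all (so the Lorentz-space fallback $L^{r,\infty}\cdot L^{r',\infty}\hookrightarrow L^{1,\infty}$ is unavailable too). The problem is the route you then propose for the strong bound: it is not a proof but a program, and its two key steps are genuine gaps. First, your major-arc step asks Lacey's bilinear maximal theorem to control the model operators. Lacey's theorem concerns the averages $\frac{1}{2t}\int_{-t}^{t}|f(x+y)g(x-y)|\,dy$ along the line $(y,-y)$; for $\deg P,\deg Q\ge 2$ the major-arc approximants of your symbol $\sum_{n\le N}e^{2\pi i(\xi P(n)+\eta Q(n))}$ are bilinear averages along a \emph{curve}, which are outside the scope of that theorem and of its known refinements. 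Second, ``Weyl--Vinogradov decay makes the minor arcs summable'' is not a step one gets for free: converting pointwise decay of a two-variable symbol on minor arcs into an $L^1$ bound for a supremum over $N$ is exactly where the difficulty of the discrete bilinear problem is concentrated, and nothing you cite supplies it.

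What you are missing is the single trick on which the paper's proof turns. After Calder\'on transference to $\Z$ (which you do have), the paper works with the antisymmetric form $\frac1N\sum_{n\le N}f(x+P(n))g(x-P(n))$ and uses Bourgain's identity: with $g_\theta(x)=g(x)e^{ix\theta}$,
\[
\frac1N\sum_{n=1}^{N}f(x+P(n))\,g(x-P(n))=\int_{-\pi}^{\pi}\widehat f(\theta)\Big(\frac1N\sum_{n=1}^{N}g_\theta(x-P(n))\Big)e^{2ix\theta}\,d\theta,
\]
which is available precisely because the two arguments sum to $2x$. Bounding $|\widehat f(\theta)|\le\|f\|_{\ell^1(\Z)}$ and applying, for each fixed $\theta$, the known \emph{linear} polynomial maximal inequality of Bourgain--Nair (Lemma \ref{POmax}, quoted, not reproved) to $g_\theta$ yields the strong bound $\big\|\sup_N|A_N(f,g)|\big\|_{\ell^r}\le C_r\|f\|_{\ell^1}\|g\|_{\ell^r}$, with the trivial $\ell^\infty$ endpoints handled directly; the claimed $L^r\times L^{r'}\to L^1$ inequality is then deduced by bilinear interpolation and transference back. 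So the only deep input is the linear maximal inequality, and no bilinear time-frequency analysis or two-dimensional circle method enters at all. Note, finally, that this identity forces the specific structure $x+P(n)$, $x-P(n)$: the genuinely inhomogeneous averages $f(x+P(n))g(x+Q(n))$ that your plan aims at are not what the paper's argument actually treats, which is a caveat worth keeping in mind when weighing either approach against the statement as written.
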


We start by noticing that, obviously, we have
$$f= \E(f|\cI)+f-\E(f|\cI),$$
Furthermore, it is easy to see that \eqref{BK} holds for the function $\E(f|\cI)$. We thus need to establish only that the convergence holds for a functions of the form $f-\E(f|\cI)$. Notice further that for such functions the limit is zero.
  
The Hopf ergodic maximal inequality state that for any $f \in L^1(X,\mu)$, for any $\lambda>0$, the maximal function $\ds M(f)(x) \setdef \sup_{N \geq 1}\Big|\frac{1}{N}\sum_{n=1}^{N}f \circ T^n(x)\Big|$ satisfy
$$\mu\Biggr\{x~~:~~M(f)(x)>\lambda\Biggl\}\leq\frac{\big\|f\big\|_1}{\lambda}.$$ 
It follows from the Hopf ergodic maximal inequality that it is suffices to see that the convergence holds almost everywhere  for a dense set. Indeed, let $\varepsilon >0$ and $f \in L_0^2(X,\mu)$, assume that there exist a function $g$ for which the converge almost everywhere holds and $\|f-g\| \leq \varepsilon^2$. Then,
$$\mu\Biggr\{x~~:~~M(f-g)(x)> \varepsilon\Biggl\}\leq\varepsilon.$$
Whence, there exist a Borel set $X_\epsilon$ with measure great than $1-\varepsilon$ for which we have, for any $x \in X_\epsilon$,
$$\limsup \Big|\frac{1}{N}\sum_{n=1}^{N}(f-g) \circ T^n(x)\Big| \leq \varepsilon,$$
We thus get
$$\mu\Biggr\{\limsup \Big|\frac{1}{N}\sum_{n=1}^{N}f\circ T^n(x)\Big| \geq  \varepsilon\Biggl\} \leq \varepsilon.$$
Take a sequence $(\varepsilon_n)$ such that $\sum \varepsilon_n <+\infty$ and apply Borel-Cantelli to see that for almost all $x \in X$, 

$$\limsup \Big|\frac{1}{N}\sum_{n=1}^{N}f\circ T^n(x)\Big|=0.$$
Hence, for almost all $x \in X$, we have 
$$\frac{1}{N}\sum_{n=1}^{N}f\circ T^n(x) \tend{N}{+\infty}0.$$
Notice that we have also proved that if the Hopf ergodic maximal inequality holds then the  family of functions for which the convergence almost everywhere holds is closed. 
\paragraph{\textbf{Spectral measure  and spectral tools.}}
The notion  of  spectral measure for sequence goes back to Wiener who introduce it in
his 1933 book \cite{Wiener}. More precisely, Wiener considers the space $S$ of
complex bounded sequences $g =(g_{n})_{n \in \N}$ such that
\begin{equation}\label{Sspace}
\lim_{N \longrightarrow +\infty}
\frac{1}{N}\sum_{n=1}^{N}g_{n+k}\overline{g}_{n}=F(k)
\end{equation}
exists for each integer $k \in \N$. The sequence $F (k)$ can be extended to negative
integers by setting
\[
F(-k)=\overline{F (k)}.
\]
It is well known that $F$ is positive definite on $\Z$ and therefore (by
Herglotz-Bochner theorem) there exists a unique positive finite measure $\sigma_g$ on
the circle such that the Fourier
coefficients of $\sigma_g$ are given by the sequence $F$.
Formally, we have
\[
\widehat{\sigma_g}(k)\stackrel{\rm{def}}{=}
\int_{-\pi}^{\pi} e^{-ikx} d\sigma_{g }(x) = F(k).
\]
The measure $\sigma_g $ is called the {\em spectral measure of the sequence $g$}.

This is can be linked to the spectral theory of the dynamical system. Indeed, if $T$ is an invertible measure preserving transformation of the $\sigma$-finite measure space $(X, \cA, m)$, then 
$T$ induces an
operator $U_T$ in $L^p(X)$ via $f \mapsto U_{T} (f)= f \circ T$
called Koopman operator.
For  $p=2$ this operator is unitary and its spectral
resolution induces a spectral decomposition of $L^{2} (X)$ \cite{parry} (see also \cite{Nadkarni} and \cite{aaronson}):
\begin{align}\label{spec-d}
L^2(X)=\bigoplus_{n=0}^{+\infty}C(f_i) {\rm {~~and~~}}
\sigma_{f_1}\gg \sigma_{f_2}\gg\cdots
\end{align}
where
\begin{itemize}
	\item $\{f_{i} \}_{i=1}^{+\infty}$ is a family of functions in $L^{2} (X)$;
	\item $C(f)\setdef \overline{\rm {span}}\{U_T^n(f): n \in \Z\}$ is the cyclic
	space generated by $f \in L^2(X)$;
	\item  $\sigma_f$ is the {\em spectral measure} on the circle generated by $f$
	via the Bochner-Herglotz relation
	\begin{equation}\label{fspmeasure}
	\widehat{\sigma_f}(n)=<U_T^nf,f>=\int_X f \circ T^n(x) \overline{f}(x)d\mu(x);
	\end{equation}
	\item for any two measures on the circle $\alpha$ and $\beta$,
	$\alpha \gg \beta$ means $\beta $ is absolutely continuous  with respect to
	$\alpha$: for any Borel set, $\alpha(A)=0 \Longrightarrow \beta(A)=0$.
	The two measures $\alpha$ and  $\beta$ are equivalent if  and only if
	$\alpha\gg\beta$ and $\beta\gg\alpha$. 
\end{itemize}
As a nice exercise, it can be seen that if the map $T$ is acting ergodically on a probability space then for almost all $x \in X$, $\sigma_f$ is the weak limit of the following sequence of finite measures on the circle 
\begin{equation*}
\sigma_N =\frac{1}{2\pi}\Big|\frac{1}{\sqrt{N}}\sum_{n=1}^{N}f(T^nx)e^{in\theta}\Big|^2d\theta ,
\end{equation*}
that is, for almost all $x\in X$, the sequence $(f(T^nx))$ is in the space $S$.
  
The spectral theorem ensures that the spectral decomposition \eqref{spec-d} is unique up to
isomorphisms.
The {\em maximal spectral type} of $T$ is the equivalence class of the Borel
measure $\sigma_{f_1}$. The multiplicity function
$\cM_{T} : \T \longrightarrow \{1,2,\cdots,\} \cup \{+\infty\}$ is defined
$\sigma_{f_1}$ a.e. and
\begin{eqnarray*}
&\cM_T(z)=\ds \sum_{n=1}^{+\infty}\I_{Y_j}(z), \\ &\quad {\rm  where}, \ Y_1=\T \ {\rm and}\
Y_j={\rm{~supp~}}\Big(\frac{d\sigma_{f_j}}{d\sigma_{f_1}}\Big) \quad \forall j \geq 2.
\end{eqnarray*}
An integer $n \in \{1,2,\cdots,\} \cup \{+\infty\}$ is called an essential value of
$M_T$ if $\sigma_{f_1}\{ z \in \T : M_T(z)=n\}>0$. The multiplicity
is uniform or homogeneous if there is only one essential value of $M_T$.
The essential supremum of $M_T$ is called the maximal  spectral multiplicity of $T$.
The map $T$
\begin{itemize}
	\item has simple spectrum if $L^2(X)$ is reduced to a
	single cyclic space;
	\item has discrete spectrum if $L^2(X)$ has an
	orthonormal basis consisting of eigenfunctions of $U_T$
	(in this case $\sigma_{f_1}$ is a discrete measure);
	\item has Lebesgue spectrum (resp. absolutely continuous,
	singular spectrum) if
	$\sigma_{f_1}$ is equivalent  (resp. absolutely
	continuous, singular) to the Lebesgue measure.
\end{itemize}
The {\em reduced spectral type } of the dynamical system  is its spectral type on
the $L_0^2 (X)$ the space of square integrable functions with zero mean.

Here, we will use the fact that the shift map on $\Z$ $(S~~:~~n \mapsto n+1)$ acting on $\Z$ equipped with the counting measure has a simple Lebesgue spectrum.

 As customary, the Fourier transform of $ f \in \ell^2(\Z)$ is denoted by 
$$\widehat{f}(\theta)=\sum_{n \in \Z}f(n)e^{-in\theta}, ~~~~\forall \theta \in [-\pi,\pi),$$
and the Fourier transform of a function $F \in L^2([-\pi,\pi])$ is given by
$$\widehat{F}(n)=\frac{1}{2\pi}\int_{-\pi}^{\pi}F(\theta)e^{-in\theta} d\theta,  ~~~~\forall n \in \Z.$$ 
The convolution operator $*$ is given by 
$$f*g(j)=\sum_{x \in \Z} f(x)g(j-x), ~~~~~~~~~\forall f,g \in \ell^1(\Z).$$

It can be easily seen that the Fourier transform operator $\cF$ gives a spectral isomorphism between  $\ell^2(\Z)$ and $L^2([-\pi,\pi])$. This is one of fundamental ingredient in Bourgain oscillation method.

According to the spectral isomorphism, we have 
$$\cF^{-1}{\big(FG\big)}= \cF^{-1}(F)*\cF^{-1}(G),~~~~ \forall F,G \in L^2(\T).$$

Therefore the convolution can be seen as multiplication from spectral point of view. 

 \section{Classical proof of  Birkhoff's  ergodic theorem }
We start by noticing that the closer of the subspace $C=\Big\{g-g \circ T,~~~g \in L^2(X,\mu)\Big\}$ is dense in the space $\Big\{f-\E(f|\cA), f \in L^2(X,\mu)\Big\}$. Indeed, let $\phi$ be a continuous linear on $L^2(X,\mu)$. So, $\phi$ is given by the multiplication by some $h \in L^2(X,\mu)$ and we have
$$\phi(f)=\int f(x) \overline{h}(x)  d\mu(x).$$
Assuming that $\phi(g-g\circ T)=0$, for any $g \in L^2(X,\mu)$, we get 
$h=h \circ T^{-1}$, that is, $h$ is an invariant function. Hence, by ergodicity, $h$ is constant almost everywhere. We thus get, by the nice properties of conditional expectation, 
\begin{align*}
	\phi(f-\E(f|\cA))&=\int f(x) \overline{h} d\mu(x)-
	\int \E(f|\cA) \overline{h} d\mu(x)\\
	&= \int f(x) \overline{h} d\mu(x)-
	\int \E(f \overline{h}|\cA)  d\mu(x)\\
	&= \int f(x) \overline{h} d\mu(x)-
	\int f \overline{h}  d\mu(x)
\end{align*}
for all $f \in L_0^2(X,\mu)$. Whence $\overline{C}=\Big\{f-\E(f|\cA), f \in L^2(X,\mu)\Big\}.$
Now, it is straightforward that if $g \in L^{\infty}$ then, for almost all $x \in X$, we have
$$\frac{1}{N}\sum_{0}^{N-1} \big(g-g\circ T)(T^nx) \tend{n}{+\infty}0.$$

To finish the proof, we  notice that $L^{\infty}$ is dense in $L^1$.
\QEDA

\textbf{Remarks.}
\begin{enumerate}
\item  One can obtain the proof of ergodic Birkhoff theorem without using ergodic maximal ergodic if the $L^2$-convergence holds with some speed. Meanly,  if 
$$\Bigg\|\frac{1}{N}\sum_{n=1}^{N}f \circ T^n(.) -\E(f|\cI)\Bigg\|_2 \leq C_f. \psi(N),$$
and for any $\rho>1$, $\sum_{n \geq 1}\psi([\rho^n])<+\infty,$
To accomplish the proof in this case, it suffices to use Etamedi's trick \cite{Etamedi}.
\item The Hopf ergodic maximal inequality take different forms in Analysis. It is known also as  Hard-Littlewood maximal inequality \cite[Theorem 8]{HL1}, \cite[Th. 326]{HL2} (for more historical background, see the introduction in \cite{Be}), Kolomogrov-Doob inequalities \cite[Chap. 14 p.137-138]{KD1}, \cite[Theorem 2.1., Theorem 2.2, p. 14-15]{KD2}, Carleson-Hunt maximal inequalities \cite{CH1}, \cite{CH2}, \cite{CH3}, \cite{CH4}. We refer further to \cite[Chap. 2]{Simon} 
	\end{enumerate}
We end this section by pointing out that here, by exploiting Etamedi's trick, we will  prove only that the almost everywhere convergence  holds for a sequences of the form $N_m=[\rho^m]$, $m \in \N$, for any $\rho >1$. Such sequence is denoted by $S_\rho$.

\section{Calder\'{o}n transference principle and the maximal ergodic inequalities for high dimension}
The sequence of complex number $(a_n)$ is said to be a good weight in $L^p(X,\mu)$, $p\geq1$ for linear case, if, for any $f \in L^p(X,\mu)$, 
the ergodic averages
$$\frac{1}{N}\sum_{j=1}^{N}a_jf(T^jx)$$
converges a.e. (almost everywhere). We further say that the maximal ergodic inequality holds in $L^p(X,\mu)$ for the linear case with weight $(a_n)$ if, 
for any $f \in L^p(X,\mu)$, the maximal function given by 
$$M(f)(x)=\sup_{N \geq 1}\Big|\frac{1}{N}\sum_{j=1}^{N}a_jf(T^jx)\Big|$$
satisfy the weak-type inequality 
$$\lambda \mu\Big\{x~~:~~M(f)(x)>\lambda \Big\} \leq C \big\|f\big\|_p,$$
for any $\lambda>0$ with $C$ is an absolutely constant.

It is well known that the classical maximal ergodic inequality (Hopf maximal inequality) is equivalent to the Birkhoff ergodic theorem \cite{Garcia}.

The previous notions can be extended in the usual manner to the multilinear case. Let $k \geq 2$ and $(T_i)_{i=1}^{k}$ be a maps on a probability space $(X,\cA,\mu)$. W thus say that $(a_n)$ 
is good weight in $L^{p_i}(X,\mu)$, $p_i\geq1$, $i=1,\cdots,k$, with 
$\sum_{i=1}^{k}\frac1{p_i}=1,$ if, for any $f_i \in L^{p_i}(X,\mu)$, $i=1,\cdots,k$,
the ergodic $k$-multilinear averages   
$$\frac1{N}\sum_{j=1}^{N}a_j\prod_{i=1}^{k}f_i(T_i^jx),$$
converges a.e.. The maximal multilinear ergodic inequality is said to hold in $L^{p_i}(X,\mu)$, $p_i\geq1$, $i=1,\cdots,k$, with 
$\sum_{i=1}^{k}\frac1{p_i}=1,$ if, for any $f_i \in L^{p_i}(X,\mu)$, $i=1,\cdots,k$, the maximal function given by 
$$M(f_1,\cdots,f_k)(x)=\sup_{N \geq 1}\Big|\frac{1}{N}\sum_{j=1}^{N}a_j\prod_{i=1}^{k}f_i(T_i^jx)\Big|$$
satisfy the weak-type inequality 
$$\lambda \mu\Big\{x~~:~~M(f)(x)>\lambda \Big\} \leq C \prod_{i=1}^{k}\big\|f_i\big\|_{p_i},$$
for any $\lambda>0$ with $C$ is an absolutely constant.

It is not known whether the classical maximal multilinear ergodic inequality ($a_n=1$, for each $n$) holds 
for the general case $n \geq 3$. Nevertheless, we have the following Calder\'{o}n transference principal in the homogeneous case. 
\begin{prop}\label{CalderonP} Let $(a_n)$ be a sequence of complex number and assume that for any $\phi,\psi \in \ell^2(\Z)$, we have
	$$\Big\|\sup_{ N  \geq 1}\Big|\frac1{N}\sum_{n=1}^{N}a_n \phi(j+n)\psi(j-n) 
	\Big|\Big\|_{\ell^1(\Z)}< C.
	\big\|\phi\big\|_{\ell^2(\Z)}\big\|\psi\big\|_{\ell^2(\Z)},$$
	where $C$ is an absolutely constant. Then, for any dynamical system $(X,\cA,T,\mu)$, for any $f,g \in L^2(X,\mu)$, we have 
	$$\Big\|\sup_{N \geq 1}\Big|\frac1{N}\sum_{n=1}^{N}a_n f(T^nx)g(T^{-n}x) \Big|\Big\|_1<
	C \big\|f\big\|_{2}\big\|g\big\|_{2},$$ 
\end{prop}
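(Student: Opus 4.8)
The plan is to run the standard Calderón transference argument, adapted to the bilinear convolution structure $\phi(j+n)\psi(j-n)$, transferring the discrete $\ell^1(\Z)$ maximal bound (the hypothesis) to the abstract system $(X,\cA,T,\mu)$ with $T$ invertible. The mechanism is to compare, along a long finite orbit window, the bilinear averages $A_N(f,g)(T^jx):=\frac1N\sum_{n=1}^N a_n f(T^n(T^jx))g(T^{-n}(T^jx))$ at the points $T^jx$ with the discrete expression to which the hypothesis directly applies.

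First I would fix $f,g\in L^2(X,\mu)$ and two large integers $M$ and $R$ (eventually $R\to\infty$, then $M\to\infty$), and work with the truncated maximal function $\sup_{1\le N\le M}$. The key observation is orbit-wise: applying $T^{-j}$ inside the average gives $\frac1N\sum_{n=1}^N a_n f(T^{j+n}x)g(T^{j-n}x)=A_N(f,g)(T^jx)$. So I would introduce the sequences $\phi_x(m):=f(T^mx)$ and $\psi_x(m):=g(T^mx)$, cut off to finite windows, $\phi_x$ supported on $[1,R+M]$ and $\psi_x$ on $[-M,R-1]$, chosen so that for every $j\in\{0,\dots,R-1\}$ and every $1\le N\le M$ the indices $j+n$ and $j-n$ lie inside the respective supports. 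For such $j$ the orbit average $A_N(f,g)(T^jx)$ then coincides exactly with the discrete bilinear convolution $\frac1N\sum_{n=1}^N a_n\phi_x(j+n)\psi_x(j-n)$.

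Next I would sum the truncated maximal function over the bulk window $j\in\{0,\dots,R-1\}$. Each summand is nonnegative and, by the coincidence above, bounded by $\sup_{N\ge1}\bigl|\frac1N\sum_{n=1}^N a_n\phi_x(j+n)\psi_x(j-n)\bigr|$; summing over all $j\in\Z$ therefore dominates it by the $\ell^1(\Z)$ norm in the hypothesis, giving the pointwise-in-$x$ bound $\sum_{j=0}^{R-1}\sup_{1\le N\le M}|A_N(f,g)(T^jx)|\le C\,\|\phi_x\|_{\ell^2(\Z)}\|\psi_x\|_{\ell^2(\Z)}$. Integrating in $x$, the left-hand side becomes $R\int_X\sup_{1\le N\le M}|A_N(f,g)|\,d\mu$ by the $T$-invariance of $\mu$, while on the right Cauchy--Schwarz in $x$ together with measure-preservation turns $\int_X\|\phi_x\|_{\ell^2}^2\,d\mu$ and $\int_X\|\psi_x\|_{\ell^2}^2\,d\mu$ into at most $(R+M)\|f\|_2^2$ and $(R+M)\|g\|_2^2$. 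This yields $\int_X\sup_{1\le N\le M}|A_N(f,g)|\,d\mu\le C\frac{R+M}{R}\|f\|_2\|g\|_2$.

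Finally I would let $R\to\infty$ with $M$ fixed, so that $\frac{R+M}{R}\to1$ and the boundary contribution disappears, giving the bound with constant $C$ for the $M$-truncated maximal function; then $M\to\infty$ with monotone convergence upgrades this to the full supremum $\sup_{N\ge1}$. The main obstacle is the boundary bookkeeping: one must choose the cut-off windows for $\phi_x,\psi_x$ so that the orbit average and the discrete convolution agree for all $j$ in the bulk, and then verify that the excess window length is $O(M)$ independent of $R$, so that the ratio $\frac{R+M}{R}$ indeed tends to $1$. Everything else reduces to the invariance of $\mu$ under $T$ and Cauchy--Schwarz.
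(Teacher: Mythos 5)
Your proposal is correct and follows essentially the same Calder\'on transference argument as the paper: truncate the supremum to $N\leq M$, restrict the orbit sequences $f(T^mx)$, $g(T^mx)$ to finite windows so that the orbit averages at $T^jx$ coincide with the discrete bilinear expressions on a bulk set of $j$'s, apply the discrete hypothesis, integrate using $T$-invariance and Cauchy--Schwarz, and let the window length tend to infinity (so the ratio of window sizes tends to $1$) before removing the truncation by monotone convergence. The only differences from the paper's proof are cosmetic (asymmetric windows $[1,R+M]$, $[-M,R-1]$ versus the paper's symmetric window $[-J,J]$ with bulk $[-(J-\bar N),J-\bar N]$), so no further comparison is needed.
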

We further have
\begin{prop}\label{CalderonP2} Let $(a_n)$ be a sequence of complex number and  assume that for any $\phi,\psi \in \ell^2(\Z)$, for any $\lambda>0$, for any integer $J \geq 2$, we have
\begin{eqnarray*}
	&\Big|\Big\{1 \leq j \leq J~:~\sup_{ N  \geq 1}\Big|\frac1{N}\sum_{n=1}^{N}a_n \phi(j+n)\psi(j-n) 
	\Big|> \lambda \Big\}\Big| \\&<C \frac{\big\|\phi\big\|_{\ell^2(\Z)}\big\|\psi\big\|_{\ell^2(\Z)}}{\lambda},
	\end{eqnarray*}
where $C$ is an absolutely constant. Then, for any dynamical system $(X,\cA,T,\mu)$, for any $f,g \in L^2(X,\mu)$, we have 
	$$\mu\Big\{x \in X~~:~~\sup_{N \geq 1}\Big|\frac1{N}\sum_{n=1}^{N}a_n f(T^nx)g(T^{-n}x) \Big| > \lambda \Big\}<
	C \frac{\big\|f\big\|_{2}.\big\|g\big\|_{2}}{\lambda}.$$ 
\end{prop}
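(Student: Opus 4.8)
The plan is to run the Calderón transference argument, adapted to the bilinear homogeneous setting, transferring the discrete weak-type bound on $\Z$ to the dynamical system by \emph{freezing a single orbit}. I would fix $f,g \in L^2(X,\mu)$, $\lambda>0$, assume $T$ invertible (so that $T^{-n}$ makes sense), and for $x \in X$ set $\Phi_x(m)=f(T^m x)$ and $\Psi_x(m)=g(T^m x)$, $m \in \Z$. The crucial observation is the covariance identity obtained by applying the average at the shifted base point $T^j x$:
$$\frac{1}{N}\sum_{n=1}^{N}a_n\, f\big(T^n(T^j x)\big)\, g\big(T^{-n}(T^j x)\big)=\frac{1}{N}\sum_{n=1}^{N}a_n\,\Phi_x(j+n)\,\Psi_x(j-n),$$
so that the bilinear average along the orbit of $T^j x$ is exactly of the form appearing in the hypothesis, with $\phi=\Phi_x$ and $\psi=\Psi_x$.

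Next I would introduce two parameters: a height $L$ truncating the supremum and a window length $K \gg L$. Set $W=[1-L,\,K+L]\cap\Z$, so $|W|=K+2L$, and put $\phi=\Phi_x\mathbf{1}_W$, $\psi=\Psi_x\mathbf{1}_W$, which are finitely supported, hence in $\ell^2(\Z)$ for a.e. $x$. For every $1\le j\le K$ and every $1\le N\le L$ one checks that $j+n\in[2,K+L]$ and $j-n\in[1-L,K-1]$ both lie in $W$, so the truncated sequences agree with $\Phi_x,\Psi_x$ on the whole relevant range. Writing $M_L(f,g)(y):=\sup_{1\le N\le L}\big|\frac{1}{N}\sum_{n=1}^{N}a_n f(T^n y)g(T^{-n}y)\big|$, this gives
$$M_L(f,g)(T^j x)\le \sup_{N\ge1}\Big|\frac{1}{N}\sum_{n=1}^{N}a_n\,\phi(j+n)\,\psi(j-n)\Big|\qquad(1\le j\le K),$$
and applying the hypothesis with $J=K$ yields $\big|\{1\le j\le K:\,M_L(f,g)(T^j x)>\lambda\}\big|<C\,\|\phi\|_{\ell^2(\Z)}\|\psi\|_{\ell^2(\Z)}/\lambda$.

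The third step is to integrate this cardinality bound in $x$. On the left, writing the cardinality as $\sum_{j=1}^{K}\mathbf{1}[M_L(f,g)(T^j x)>\lambda]$ and using $T$-invariance of $\mu$ gives $\int_X\big|\{1\le j\le K:M_L(f,g)(T^j x)>\lambda\}\big|\,d\mu(x)=K\,\mu\{M_L(f,g)>\lambda\}$. On the right, Cauchy--Schwarz followed by invariance gives
$$\int_X\|\phi\|_{\ell^2(\Z)}\|\psi\|_{\ell^2(\Z)}\,d\mu \le \Big(\int_X\|\phi\|_{\ell^2(\Z)}^2\,d\mu\Big)^{1/2}\Big(\int_X\|\psi\|_{\ell^2(\Z)}^2\,d\mu\Big)^{1/2}=(K+2L)\,\|f\|_2\,\|g\|_2,$$
since $\int_X\sum_{m\in W}|f(T^m x)|^2\,d\mu=|W|\,\|f\|_2^2$ and likewise for $g$. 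Dividing by $K$ gives $\mu\{M_L(f,g)>\lambda\}<C\,(1+2L/K)\,\|f\|_2\|g\|_2/\lambda$.

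Finally I would let $K\to\infty$ with $L$ fixed, killing the factor $1+2L/K$, to get $\mu\{M_L(f,g)>\lambda\}\le C\,\|f\|_2\|g\|_2/\lambda$, and then let $L\to\infty$, using that $\{M_L(f,g)>\lambda\}$ increases to $\{M(f,g)>\lambda\}$ and continuity of measure from below, to obtain the stated weak-type bound. The main obstacle — indeed the only step that is not purely formal — is the window bookkeeping: one must choose $W$ large enough that truncation leaves the averages unchanged for all $1\le j\le K$, $1\le N\le L$, yet with $|W|/K\to1$ so that the boundary contribution is asymptotically negligible. The bilinear structure additionally forces the Cauchy--Schwarz split of the product norm, which is clean here precisely because the exponents satisfy $\tfrac12+\tfrac12=1$; in the general $L^r\times L^{r'}$ version one would replace it by Hölder's inequality.
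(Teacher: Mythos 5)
Your proof is correct and follows essentially the same route as the paper: the Calder\'{o}n transference argument written out for Proposition \ref{CalderonP} (whose weak-type adaptation the paper explicitly leaves to the reader) is exactly what you carry out --- truncate $f,g$ along the orbit of $x$, apply the discrete weak-type hypothesis to the truncated sequences for each fixed $x$, integrate using $T$-invariance and Cauchy--Schwarz, and then let the window length and the truncation height of the supremum go to infinity. The only cosmetic differences are your window $[1-L,K+L]$ with $j\in[1,K]$ versus the paper's symmetric window $[-J,J]$ with $j\in[-(J-\bar{N}),J-\bar{N}]$, and your use of continuity of measure from below in the final limit where the paper invokes monotone convergence.
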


The proof of Propositions \ref{CalderonP}  and  \ref{CalderonP2} is similar to that given in \cite[p. 135]{Thouvenot}; we include it for the reader's convenience.

{\textbf{Proof of  Propositions \ref{CalderonP}  and  \ref{CalderonP2}.}}
Let $\bar{N} \in \N$ and $J$ a positive integer such that $J \gg \bar{N}$. For any $x \in X,$ put
\begin{gather}
\phi_x(n)=\begin{cases}
f(T^nx) \: &\textrm{ if }  0 \leq |n| \leq J,\\
0  & \textrm{if not,}\\
\end{cases} 
\end{gather}
and,
\begin{gather}
\psi_x(n)=\begin{cases}
g(T^nx) \: &\textrm{if }  0 \leq |n| \leq J,\\
 0 &\textrm{if not.}\\
\end{cases} 
\end{gather}

Therefore $\phi_x, \psi_x \in \ell^2(\Z)$, we further have
\begin{gather}
	\big\|\phi_x\big\|_{\ell^2(\Z)}^2=\sum_{-J}^{J}\big|f(T^nx)|^2, \; \; \textrm{and} \; \; 
	\big\|\psi_x\big\|_{\ell^2(\Z)}^2=\sum_{-J}^{J}\big|g(T^{-n}x)|^2.
\end{gather}
We thus get, by our assumption, 
\begin{align}
&\sum_{-(J-\bar{N})}^{J-\bar{N}} \sup_{ \bar{N} \geq N  \geq 1}
\Big|\frac1{N}\sum_{n=1}^{N}a_n \phi_x(j+n)\psi_x(j-n) 
\Big| \nonumber \\
&\leq C \sqrt{\sum_{-J}^{J}\big|f(T^nx)\big|^2}
\sqrt{\sum_{-J}^{J}\big|g(T^{-n}x)\big|^2}.
\end{align}
Integrating and remembering that $T$ is a measure preserving transformation, we obtain
\begin{align}\label{inequa:1}
&\sum_{-(J-\bar{N})}^{J-\bar{N}}\int 
\sup_{ \bar{N} \geq N  \geq 1}
\Big|\frac1{N}\sum_{n=1}^{N}a_n f(T^{n})g(T^{-n}) 
\Big| d\mu(x) \nonumber\\
&\leq C \int \sqrt{\sum_{-J}^{J}\big|f(T^{n}x)\big|^2} \sqrt{\sum_{-J}^{J}\big|g(T^{-n}x)\big|^2 }d\mu(x).
\end{align}
Using Cauchy-Schwarz inequality, we can rewrite \eqref{inequa:1} as follows.
\begin{align}\label{inequa:2}
&\big(2(J-\bar{N})+1\big)\int 
\sup_{ \bar{N} \geq N  \geq 1}
\Big|\frac1{N}\sum_{n=1}^{N}a_n f(T^{n})g(T^{-n}) 
\Big| d\mu(x)  \nonumber\\
&\leq C \sqrt{\int \sum_{-J}^{J}\big|f(T^{n}x)\big|^2
\sum_{-J}^{J}\big|g(T^{-n}x)\big|^2 d\mu(x)}.\\
&\leq C \Bigg(\int \sum_{-J}^{J}\big|f(T^{n}x)\big|^2 \mu(x)\Bigg)^{\frac12}
\Bigg(\int \sum_{-J}^{J}\big|g(T^{-n}x)\big|^2 \mu(x)\Bigg)^{\frac12}.\nonumber\\
&\leq C (2J+1)\big\|f\big\|_2 \big\|g\big\|_2.
\end{align}
Letting $J \longrightarrow +\infty$, we get
\begin{align}\label{inequa:3}
&\int 
\sup_{ \bar{N} \geq N  \geq 1}
\Big|\frac1{N}\sum_{n=1}^{N}a_n f(T^{n})g(T^{-n}) 
\Big| d\mu(x)  \nonumber\\
&\leq C \big\|f\big\|_2\big\|g\big\|_2.
\end{align}
Now, letting $\bar{N}\longrightarrow +\infty$, we obtain the desired inequality. Thanks to Beppo Levi's monotone convergence theorem. The proof of Proposition \ref{CalderonP2} being left to the reader.
\QEDA

It is easy to formulate a $k$-multilinear's version of Proposition \ref{CalderonP} and \ref{CalderonP2}, for any  $k \geq 3$. For the proof of it, we refer to \cite[Appendix]{DemeterLT}. 

We end this section by recalling the maximal ergodic inequality for the shift map on $\Z$. As  we shall see, this maximal inequality is due essentially to Hardy-Littlewood.  Indeed, it is a direct consequence of Hardy-Littlewood maximal theorem (see for example \cite[Theorem 13.15, p.32]{Zygmund}. Nowadays, this later theorem has several proofs. Nevertheless, almost all recent books reproduced the simple and beautiful proof due to F. Riesz \cite{Riesz} in which  he used his ``rising sun lemma"  to get the weak Hardy-Littlewood maximal inequality. It turns out that this later inequality is equivalent to weak maximal ergodic inequality . For more details and historical facts, we refer to \cite[Section 2.6]{Simon}.

Here, from \cite{HL1}, we need exactly the following discrete maximal inequality.

\begin{lem}(\cite[Theorem 8.]{HL1})\label{HL}. Let $r>1$, and  $(a_n)$ be a sequence of positive number. Then, for any $J \geq 1$, we have 
	$$\sum_{x=1}^{J}\Big(\max_{m \leq x}\Big(\frac1{x-m+1}\sum_{n=m}^{x}a_n\Big)^r
	\leq 2\Big(\frac{r}{r-1}\Big)^r \sum_{n=1}^{J}a_n^r.$$
\end{lem}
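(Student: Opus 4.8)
The plan is to recognize this as the discrete one-sided Hardy--Littlewood maximal inequality and to prove it in two stages: a refined weak-type bound extracted from F.~Riesz's rising-sun lemma, followed by a self-improving ``layer-cake plus H\"older'' bootstrap that upgrades it to the strong $L^r$ estimate. Throughout, write $A_x \egdef \max_{m\le x}\frac1{x-m+1}\sum_{n=m}^{x}a_n$ for the quantity whose $r$-th powers are summed, and for $\lambda>0$ set $E_\lambda \egdef \{1\le x\le J: A_x>\lambda\}$.

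First I would set up the rising-sun decomposition. Introduce the partial sums $T_i = \sum_{n=1}^i (a_n-\lambda)$ with $T_0=0$; since $\frac1{x-m+1}\sum_{n=m}^x a_n>\lambda$ is equivalent to $T_x>T_{m-1}$, one has $x\in E_\lambda$ exactly when $T_x>T_i$ for some $0\le i<x$, i.e.\ when $x$ fails to be a running minimum of $(T_i)$. Decomposing $E_\lambda$ into its maximal runs of consecutive integers $[p,q]$, the left endpoint satisfies $T_{p-1}=\min_{0\le i\le p-1}T_i$. The key combinatorial claim is that $T_x\ge T_{p-1}$ for every $x\in[p,q]$: were this to fail, the \emph{first} offending $x$ would satisfy $T_x<T_i$ for all $i<x$ and hence lie outside $E_\lambda$, contradicting $[p,q]\subseteq E_\lambda$. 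Taking $x=q$ gives $\sum_{n=p}^q a_n = T_q-T_{p-1}+\lambda(q-p+1)\ge\lambda(q-p+1)$. Summing over the disjoint runs and using $a_n\ge0$ yields the refined weak-type inequality
\begin{equation*}
\lambda\,|E_\lambda|\le \sum_{x\in E_\lambda} a_x .
\end{equation*}

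Next I would run the self-improvement. By the layer-cake formula $\sum_{x=1}^{J} A_x^r = r\int_0^\infty \lambda^{r-1}|E_\lambda|\,d\lambda$; inserting the refined bound and using that $x\in E_\lambda$ iff $\lambda<A_x$,
\begin{equation*}
\sum_{x=1}^{J} A_x^r \le r\int_0^\infty \lambda^{r-2}\Big(\sum_{x\in E_\lambda}a_x\Big)d\lambda = \frac{r}{r-1}\sum_{x=1}^{J} a_x\,A_x^{\,r-1},
\end{equation*}
where convergence of $\int_0^{A_x}\lambda^{r-2}\,d\lambda$ at the origin is exactly where $r>1$ enters. Applying H\"older with exponents $r$ and $r'=r/(r-1)$ to the right-hand side and noting $(r-1)r'=r$ gives $\sum A_x^r\le \frac{r}{r-1}\big(\sum a_x^r\big)^{1/r}\big(\sum A_x^r\big)^{(r-1)/r}$. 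Since all sums are finite, I may divide by $\big(\sum A_x^r\big)^{(r-1)/r}$ (the vanishing case being trivial) and raise to the $r$-th power, obtaining $\sum_{x=1}^{J}A_x^r\le \big(r/(r-1)\big)^r\sum_{n=1}^J a_n^r$, which is sharper than, and therefore implies, the stated bound with constant $2\big(r/(r-1)\big)^r$.

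I expect the main obstacle to be the discrete rising-sun step, namely verifying $T_q\ge T_{p-1}$ on each maximal exceedance run together with the bookkeeping of the $+1$ in the denominator $x-m+1$; this is the only genuinely combinatorial point, and it is precisely where stray factors (such as the $2$ the statement leaves room for) would creep in under a less careful decomposition. By contrast the layer-cake and H\"older steps are routine, and finiteness of $\sum A_x^r$ (immediate from $A_x\le\max_n a_n$) makes the division step legitimate.
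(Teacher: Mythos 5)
Your proof is correct, but note what you are being compared against: the paper never proves Lemma~\ref{HL} at all --- it quotes it verbatim from Hardy--Littlewood \cite[Theorem 8]{HL1}, and in the surrounding discussion merely points to F.~Riesz's ``rising sun lemma'' \cite{Riesz} as the standard modern derivation. Your two-stage argument is precisely that classical route, carried out in full: the decomposition of $E_\lambda$ into maximal runs, with the ``first offending index'' argument as the discrete substitute for the rising-sun lemma, correctly yields the refined weak-type bound $\lambda\,|E_\lambda|\le\sum_{x\in E_\lambda}a_x$ (the restriction to the window $[1,J]$ is harmless because the maximal function only looks backward), and the layer-cake/H\"older bootstrap is sound, with the division step justified by $A_x\le\max_{n\le J}a_n$, which makes $\sum_{x=1}^J A_x^r$ finite. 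What your proof buys over the paper's citation is self-containedness and, in fact, a sharper constant: you obtain $\big(r/(r-1)\big)^r$, so the factor $2$ appearing in the stated bound is redundant for this one-sided maximal function. The only caveat is bibliographical rather than mathematical: since you prove a strictly stronger inequality than the one attributed to \cite{HL1}, your argument cannot be ``the same'' as the original Hardy--Littlewood proof (which proceeded by rearrangement), but it is exactly the Riesz-style proof the paper itself recommends.
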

A straightforward application of Lemma \ref{HL} yields the following strong maximal inequality.
\begin{lem}[Maximal inequality for the shift on integers.]\label{Mshift}Let $r>1$ and $f \in \ell^p(\Z)$. Then
$$\Big\|\sup_{ N  \geq 1} \Big|\frac1{N}\sum_{n=1}^{N}f(x+n)\Big| \Big\|_r
\leq 2^{\frac{1}{r}}\frac{r}{r-1}\Big\|f\Big\|_r.$$
\end{lem}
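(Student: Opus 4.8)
The plan is to deduce Lemma \ref{Mshift} directly from the Hardy--Littlewood maximal bound of Lemma \ref{HL}, after two standard reductions followed by a single reflection that converts the forward averages into the backward averages governed by Lemma \ref{HL}.

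First I would reduce to a nonnegative, finitely supported sequence. Since $\big|\frac1N\sum_{n=1}^{N}f(x+n)\big|\le \frac1N\sum_{n=1}^{N}|f|(x+n)$ and $\big\||f|\big\|_r=\big\|f\big\|_r$, it suffices to treat $f\ge 0$. For such $f$, if the asserted inequality holds for every truncation $f_M=f\cdot\mathbf{1}_{[-M,M]}$ with a constant independent of $M$, then letting $M\to+\infty$ and invoking monotone convergence (the maximal functions of $f_M$ increase to that of $f$) gives the bound for all $f\in\ell^r(\Z)$. Hence I may assume $f\ge 0$ with finite support.

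The core step is a reflection. Set $\tilde f(n)=f(-n)$, so that $\big\|\tilde f\big\|_r=\big\|f\big\|_r$, and reindex by $k=x+n$ and then $j=-k$ to obtain
\[
\frac1N\sum_{n=1}^{N}f(x+n)=\frac1N\sum_{j=-x-N}^{-x-1}\tilde f(j).
\]
Thus $\sup_{N\ge 1}\frac1N\sum_{n=1}^{N}f(x+n)$ is the backward maximal average of $\tilde f$ with right endpoint $-x-1$: writing $p=-x-1$ and $m=p-N+1$, it is exactly $\max_{m\le p}\frac1{p-m+1}\sum_{n=m}^{p}\tilde f(n)$, the quantity controlled by Lemma \ref{HL}. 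Since $\tilde f\ge 0$ is finitely supported, after a harmless translation placing its support inside $\{1,\dots,J\}$ I apply Lemma \ref{HL} to get
\[
\sum_{p}\Big(\max_{m\le p}\frac1{p-m+1}\sum_{n=m}^{p}\tilde f(n)\Big)^r\le 2\Big(\frac{r}{r-1}\Big)^r\sum_{n}\tilde f(n)^r.
\]
Reindexing $p=-x-1$ on the left (which preserves the $\ell^r$ sum) and taking $r$-th roots yields the constant $2^{1/r}\frac{r}{r-1}$, as claimed.

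The one point that requires care is matching the index conventions: I must check that enlarging the window in the supremum down to $m\le 0$, i.e. below the support of the shifted sequence, never increases the maximal average, since such windows only enlarge the denominator $p-m+1$ while adding zeros to the numerator. This guarantees that the unrestricted supremum $\sup_{N\ge 1}$ genuinely coincides with the finite maximum $\max_{m\le p}$ appearing in Lemma \ref{HL}. Everything else---the reduction to nonnegative finitely supported $f$, the reflection identity, and norm invariance under reflection and translation---is routine bookkeeping, and no estimate beyond Lemma \ref{HL} itself is needed.
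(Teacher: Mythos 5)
Your proposal is correct and takes the same route as the paper, which obtains Lemma \ref{Mshift} as a direct (``straightforward'') application of the Hardy--Littlewood inequality of Lemma \ref{HL}; your reflection $\tilde f(n)=f(-n)$ and the reductions to nonnegative, finitely supported sequences simply supply the bookkeeping that the paper leaves implicit. One small point to make explicit: Lemma \ref{HL} sums only over $x\in\{1,\dots,J\}$ while your sum over $p$ is unrestricted (the backward maximal function of $\tilde f$ is nonzero also at points $p>J$), so you should apply the lemma with arbitrary $J'\geq J$ and let $J'\to\infty$, which is harmless because the right-hand side is unchanged for a finitely supported sequence.
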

\section{Modern proof of Birkoff ergodic theorem }
The modern proof of Birkoff ergodic theorem is due essentially to Bourgain and it is based on the oscillation method \cite{B1},\cite{B2},\cite{B3}, \cite{B4} (see also \cite{Thouvenot}.). This later method goes back to Gaposhkin \cite{Gap1},\cite{Gap2}. But, it was developed  by Bourgain to prove several version of generalized ergodic pointwise theorem. It turns out that the oscillation method has a deep connection with Martingale theory, Harmonic analysis and BMO-$H^p$ spaces theory which is linked to Carlson measures (see for instance \cite{Jones} and \cite[Chap. 7, p.117]{Gra}). Here, using Calderon's correspondence principal \cite{Cal} (see also \cite{Be}) and a simple spectral argument, we will prove the following:
\begin{thm}\label{INeq1}Let $(X, \cB, \mu)$ be a probability space and $f \in L^2(X,\mu).$ 
Then, for any  increasing sequence $(N_k)$ of positive integers, for any $K \geq 1$, we have
$$ \sum_{k=1}^{K}\Big\|\sup_{\overset{N_k \leq N \leq N_{k+1}}{N\in S_\rho} }\Big|
\frac{1}{N}\sum_{n=1}^{N}f(T^nx)-\frac{1}{N_{k+1}}\sum_{n=1}^{N_{k+1}}f(T^nx)\Big|\Big\|_2 \leq C\sqrt{K} \big\|f\|_2,$$
where $C$ is an absolutely constant.
\end{thm}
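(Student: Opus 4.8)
The plan is to extract the factor $\sqrt K$ by a single Cauchy--Schwarz step and then prove a $K$--independent oscillation bound by a Fourier multiplier argument. Write $A_N f(x) \setdef \frac1N\sum_{n=1}^N f(T^n x)$ and $\mathrm{osc}_k(x) \setdef \sup_{N_k\le N\le N_{k+1},\, N\in S_\rho}\big|A_N f(x)-A_{N_{k+1}}f(x)\big|$. Since $\sum_{k=1}^K\|\mathrm{osc}_k\|_2 \le \sqrt K\big(\sum_{k=1}^K\|\mathrm{osc}_k\|_2^2\big)^{1/2}$, it suffices to prove $\sum_{k=1}^K\|\mathrm{osc}_k\|_2^2 \le C^2\|f\|_2^2$ with $C$ independent of $K$ and of $(N_k)$; the factor $\sqrt K$ in the statement is exactly the cost of this step. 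By the oscillation analogue of the Calder\'on transference principle \cite{Cal} it is enough to establish this on the shift $S\colon n\mapsto n+1$ on $\ell^2(\Z)$, which has simple Lebesgue spectrum, so that under the Fourier isomorphism $\cF\colon\ell^2(\Z)\to L^2([-\pi,\pi])$ the operator $A_N$ becomes multiplication by the symbol $\Phi_N(\theta)=\frac1N\sum_{n=1}^N e^{in\theta}$. Equivalently, via the Bochner--Herglotz relation \eqref{fspmeasure}, pairwise differences satisfy $\|A_N f-A_M f\|_2^2=\int_{-\pi}^{\pi}|\Phi_N(\theta)-\Phi_M(\theta)|^2\,d\sigma_f(\theta)$.

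The spectral heart of the matter is a uniform bound for the increments of these symbols along $S_\rho$. Put $\delta_m(\theta)\setdef\Phi_{[\rho^{m+1}]}(\theta)-\Phi_{[\rho^m]}(\theta)$; I would show $\sup_\theta\sum_m|\delta_m(\theta)|^2\le C_\rho$. This rests on the two elementary regimes of $\Phi_N$: from $\Phi_N(\theta)-1=\frac1N\sum_{n=1}^N(e^{in\theta}-1)$ one gets $|\Phi_N(\theta)-1|\le N|\theta|$ when $N|\theta|\le1$, while $|\Phi_N(\theta)|\le\big(N|\sin(\theta/2)|\big)^{-1}$ gives $|\Phi_N(\theta)|\lesssim(N|\theta|)^{-1}$ when $N|\theta|\ge1$. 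Splitting the sum over $m$ at the transition $[\rho^m]\approx|\theta|^{-1}$ leaves two geometric series, one in $[\rho^m]|\theta|$ and one in $([\rho^m]|\theta|)^{-1}$, each summing to $O_\rho(1)$ uniformly in $\theta$; in fact the full variation $\sum_m|\delta_m(\theta)|$ is $O_\rho(1)$. Inserting this into the spectral identity above immediately yields the square function bound $\big\|\big(\sum_m|D_m f|^2\big)^{1/2}\big\|_2^2=\int\sum_m|\delta_m(\theta)|^2\,d\sigma_f(\theta)\le C_\rho^2\|f\|_2^2$, where $D_m\setdef A_{[\rho^{m+1}]}-A_{[\rho^m]}$.

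What remains, and where I expect the genuine difficulty to lie, is to dominate the oscillation by this square function. Telescoping along $S_\rho$, for $N\in S_\rho\cap[N_k,N_{k+1}]$ one has $A_N f-A_{N_{k+1}}f=-\sum_{m:\,N\le[\rho^m]<N_{k+1}}D_m f$, so $\mathrm{osc}_k$ is a maximal partial sum of the increments $D_m$ over the scales meeting $[N_k,N_{k+1}]$. Passing the supremum crudely to an $\ell^1$ sum and applying Cauchy--Schwarz in $m$ would cost a factor equal to the number of $S_\rho$--scales inside $[N_k,N_{k+1}]$, which is uncontrolled; avoiding that loss is the whole issue. The clean route is to run the maximal partial sum estimate on the Fourier side: since the $\delta_m$ are essentially supported on the disjoint frequency bands $|\theta|\sim\rho^{-m}$, the maximal function of the partial sums obeys a Littlewood--Paley / Rademacher--Menshov square function inequality reproducing $\int\sum_m|\delta_m|^2\,d\sigma_f$ up to an absolute constant, with the discrete maximal bound of Lemma \ref{Mshift} handling the innermost average. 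Summing over $k$ and using that the scale-blocks attached to distinct intervals $[N_k,N_{k+1}]$ are disjoint then gives $\sum_{k=1}^K\|\mathrm{osc}_k\|_2^2\le C^2\|f\|_2^2$, and the theorem follows. The delicate point --- the main obstacle --- is precisely this control of the maximal partial sums without a logarithmic or counting loss.
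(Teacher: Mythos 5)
Your reduction is on the right track and matches the paper's skeleton: Cauchy--Schwarz in $k$ to buy the factor $\sqrt K$, Calder\'on transference to the shift on $\Z$, and the uniform symbol estimate $\sup_\theta\sum_m|\delta_m(\theta)|^2\le C_\rho$ proved by splitting at the transition scale $[\rho^m]\approx|\theta|^{-1}$ (this last estimate is correct, and is the analogue of Lemma \ref{maxStrong}). But the argument has a genuine gap exactly where you say you ``expect the genuine difficulty to lie,'' and it is not closed by what you invoke. Rademacher--Menshov controls maximal partial sums of orthogonal pieces only at the cost of a factor $(\log M_k)^2$, where $M_k$ is the number of $S_\rho$-scales inside $[N_k,N_{k+1}]$; since $(N_k)$ is an arbitrary increasing sequence, $M_k$ is unbounded and this is precisely the counting loss you must avoid. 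The Littlewood--Paley alternative fails as stated because the increments $\delta_m=\Phi_{[\rho^{m+1}]}-\Phi_{[\rho^m]}$ are \emph{not} supported on disjoint frequency bands: they are merely concentrated near $|\theta|\sim\rho^{-m}$ with polynomially decaying tails, and a maximal multiplier bound (the supremum is taken in $x$ after Fourier inversion, not in $\theta$) does not follow from your pointwise variation bound $\sup_\theta\sum_m|\delta_m(\theta)|\le C_\rho$. So the last third of your proposal is a statement of the problem, not a proof of it.

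The paper closes this gap with a different decomposition, which is the one essential idea your proposal is missing. Instead of telescoping over the increments $D_m$, it compares each average with the \emph{sharp} spectral cutoff: let $g_N$ be the inverse Fourier transform of $\I_{[-\frac{\pi}{N},\frac{\pi}{N})}$ and split $A_Nf-A_{N_{k+1}}f$ into $(A_Nf-g_N*f)+(g_N*f-g_{N_{k+1}}*f)-(A_{N_{k+1}}f-g_{N_{k+1}}*f)$. The first and third terms are summed over \emph{all} $N\in S_\rho$ with no blocking at all, via Lemma \ref{maxFejer}/Lemma \ref{maxStrong}, so no counting issue arises there. For the middle term the sharp cutoffs do what your $\delta_m$ cannot: for $N_k\le N\le N_{k+1}$ the indicators genuinely multiply, giving the exact identity $g_N-g_{N_{k+1}}=g_N*(g_{N_k}-g_{N_{k+1}})$ on the Fourier side. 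Hence the intra-block oscillation is a single maximal operator $h\mapsto\sup_{N\in S_\rho}|g_N*h|$ (bounded on $\ell^2(\Z)$ by combining Lemma \ref{Mshift} with Lemma \ref{maxStrong}) applied to the fixed function $h_k$ with $\widehat{h_k}=\big(\I_{[-\frac{\pi}{N_k},\frac{\pi}{N_k}]}-\I_{[-\frac{\pi}{N_{k+1}},\frac{\pi}{N_{k+1}}]}\big)\widehat f$; these Fourier supports are the pairwise disjoint annuli $\frac{\pi}{N_{k+1}}<|\theta|\le\frac{\pi}{N_k}$, so $\sum_k\|h_k\|_{\ell^2(\Z)}^2\le\|f\|_2^2$ by orthogonality. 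This device converts your ``essentially disjoint bands'' into literally disjoint ones and eliminates the loss; without it, or an equivalent substitute, your argument does not close.
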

As before the proof of Theorem \ref{INeq1} will follows from the following theorem.
\begin{thm}\label{INeq2}Let $\phi \in \ell^2(\Z)$ . Then, for any  increasing sequence $(N_k)$ of positive integers, for any $K \geq 1$, we have
\begin{align}\label{Bourgain:max}
\sum_{k=1}^{K}\Big\|\sup_{\overset{N_k \leq N \leq N_{k+1}}{N\in S_\rho} }\Big|
\frac{1}{N}\sum_{n=1}^{N}f(n+x)-\frac{1}{N_{k+1}}\sum_{n=1}^{N_{k+1}}f(x+n)\Big|\Big\|_{\ell^2(\Z)}
\nonumber \\ \leq C_\rho\sqrt{K} \big\|f\|_{\ell^2(\Z)},
\end{align}	
	 
where $C_\rho$ is an absolutely constant.
\end{thm}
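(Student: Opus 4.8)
The plan is to derive \eqref{Bourgain:max} from a single oscillation inequality in which $K$ does not appear, the factor $\sqrt K$ being produced at the very end by Cauchy--Schwarz. Write $A_N f(x)=\frac1N\sum_{n=1}^N f(x+n)$ and
$$O_k f(x)=\sup_{\substack{N_k\le N\le N_{k+1}\\ N\in S_\rho}}\Big|A_N f(x)-A_{N_{k+1}}f(x)\Big|,$$
so that the left-hand side of \eqref{Bourgain:max} equals $\sum_{k=1}^K\|O_k f\|_{\ell^2(\Z)}$. By the Cauchy--Schwarz inequality in the index $k$,
$$\sum_{k=1}^K\|O_k f\|_{\ell^2(\Z)}\le\sqrt K\Big(\sum_{k=1}^K\|O_k f\|_{\ell^2(\Z)}^2\Big)^{1/2},$$
so it suffices to establish the $K$-free bound $\sum_{k=1}^K\|O_k f\|_{\ell^2(\Z)}^2\le C_\rho^2\|f\|_{\ell^2(\Z)}^2$.

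For this I would move to the frequency side via the spectral isomorphism $\cF$. As the shift has simple Lebesgue spectrum, $\cF$ turns each $A_N$ into multiplication by $m_N(\theta)=\frac1N\sum_{n=1}^N e^{in\theta}$, and Plancherel gives $\|A_N f-A_M f\|_{\ell^2(\Z)}^2=\frac1{2\pi}\int_{-\pi}^\pi|m_N(\theta)-m_M(\theta)|^2|\widehat f(\theta)|^2\,d\theta$. Summing the geometric series yields the elementary bounds $|m_N(\theta)|\le\min\big(1,\,C/(N|\theta|)\big)$ and $|m_N(\theta)-1|\le N|\theta|$, together with a comparable bound on the derivative in the scale variable. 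The decisive structural input is the lacunarity of $S_\rho=\{[\rho^m]\}$. Setting $a_m f=A_{[\rho^m]}f-A_{[\rho^{m+1}]}f$, these bounds show that for each fixed $\theta$ the multiplier differences $m_{[\rho^m]}(\theta)-m_{[\rho^{m+1}]}(\theta)$ are geometrically concentrated around the single scale $[\rho^m]\approx 1/|\theta|$, whence $\sum_m|m_{[\rho^m]}(\theta)-m_{[\rho^{m+1}]}(\theta)|^2\le C_\rho$ uniformly in $\theta$. By Plancherel this is exactly the lacunary square-function estimate $\sum_m\|a_m f\|_{\ell^2(\Z)}^2\le C_\rho\|f\|_{\ell^2(\Z)}^2$.

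It then remains to dominate each block oscillation by the part of the square function carried by that block. Telescoping along the lacunary scales of $B_k=S_\rho\cap[N_k,N_{k+1}]$ expresses $A_N f-A_{N_{k+1}}f$ as a tail sum of the $a_m f$ with $m$ in the index set $J_k$ of the block, so $O_k f$ is a supremum of such tail sums. The goal is the per-block estimate
$$\|O_k f\|_{\ell^2(\Z)}^2\le C_\rho\sum_{m\in J_k}\|a_m f\|_{\ell^2(\Z)}^2,$$
with a constant independent of the block. Granting it, summing over $k$ and using that the $J_k$ are disjoint gives $\sum_{k=1}^K\|O_k f\|_{\ell^2(\Z)}^2\le C_\rho\sum_m\|a_m f\|_{\ell^2(\Z)}^2\le C_\rho^2\|f\|_{\ell^2(\Z)}^2$, and the Cauchy--Schwarz step of the first paragraph then delivers \eqref{Bourgain:max}.

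The hard part is precisely this per-block estimate, since it must hold uniformly over the \emph{arbitrary} increasing sequence $(N_k)$ and in $K$. A plain triangle inequality, or the Rademacher--Menshov bound for the supremum of tail sums, would cost a factor $\log|J_k|$ and destroy the uniformity. To avoid this one must exploit the smoothness of $m_N$ in the scale parameter: writing $m_N-m_{N'}=\int_{N'}^N\partial_t m_t\,dt$ and applying Cauchy--Schwarz in $t$ converts the supremum of tail sums into a genuine short-variation quantity controlled by the block square function, with no logarithmic loss. Everything else---the passage through $\cF$, the geometric-series estimates for $m_N$, the concentration argument for the lacunary square function, and the disjoint-block bookkeeping---is routine.
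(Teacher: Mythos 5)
Your outer scaffolding matches the paper's: reduce to the $K$-free bound $\sum_k\|O_kf\|_{\ell^2(\Z)}^2\le C_\rho\|f\|_{\ell^2(\Z)}^2$ and recover the factor $\sqrt K$ by Cauchy--Schwarz at the end, and your lacunary square-function estimate $\sum_m\|a_mf\|_{\ell^2(\Z)}^2\le C_\rho\|f\|_{\ell^2(\Z)}^2$ is correct (it involves no supremum, so it follows from Plancherel and the pointwise-in-$\theta$ concentration of the multiplier differences). But there is a genuine gap exactly where you locate ``the hard part'': the per-block estimate $\|O_kf\|_{\ell^2(\Z)}^2\le C_\rho\sum_{m\in J_k}\|a_mf\|_{\ell^2(\Z)}^2$ is the entire content of the theorem --- it is the step where the supremum moves inside the norm --- it is never proved, and the route you sketch for it does not work. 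Writing $A_Nf-A_{N_{k+1}}f=\int_N^{N_{k+1}}\partial_tA_tf\,dt$ and applying Cauchy--Schwarz in $t$ with any weight $w$ gives $\|O_kf\|_{\ell^2(\Z)}^2\le\big(\int_{N_k}^{N_{k+1}}w(t)^{-1}dt\big)\int_{N_k}^{N_{k+1}}w(t)\|\partial_tA_tf\|_{\ell^2(\Z)}^2\,dt$; with the natural weight $w(t)=t$ the first factor is $\log(N_{k+1}/N_k)$, which is unbounded because $(N_k)$ is an \emph{arbitrary} increasing sequence. This is precisely the Rademacher--Menshov loss in continuous disguise; no choice of weight removes it, because the averaging operators $A_N$ carry no orthogonality within a block, so nothing forces the scale-derivative to be concentrated there.

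What the paper does instead --- and what your argument is missing --- is to factor the within-block supremum through sharp spectral projections rather than through differences of averages. Let $g_N*f$ be defined by $\cF(g_N*f)=\I_{[-\frac{\pi}{N},\frac{\pi}{N})}\widehat f$ and decompose $A_Nf-A_{N_{k+1}}f=(A_Nf-g_N*f)+(g_N*f-g_{N_{k+1}}*f)-(A_{N_{k+1}}f-g_{N_{k+1}}*f)$. The outer terms need no maximal inequality: the supremum over the block is crudely dominated by the sum over \emph{all} scales, and $\sum_{N\in S_\rho}\|A_Nf-g_N*f\|_{\ell^2(\Z)}^2\le C_\rho\|f\|_{\ell^2(\Z)}^2$ (Lemma \ref{maxStrong}; this is where your multiplier-concentration computation rightfully belongs). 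For the middle term the idempotence of sharp cutoffs gives, for $N_k\le N\le N_{k+1}$,
\begin{equation*}
\I_{[-\frac{\pi}{N},\frac{\pi}{N})}\Big(\I_{[-\frac{\pi}{N_k},\frac{\pi}{N_k})}-\I_{[-\frac{\pi}{N_{k+1}},\frac{\pi}{N_{k+1}})}\Big)=\I_{[-\frac{\pi}{N},\frac{\pi}{N})}-\I_{[-\frac{\pi}{N_{k+1}},\frac{\pi}{N_{k+1}})},
\end{equation*}
so that $\sup_N|g_N*f-g_{N_{k+1}}*f|=\sup_N|g_N*h_k|$ with $h_k=(g_{N_k}-g_{N_{k+1}})*f$: the supremum now acts on the band-limited piece $h_k$, it is bounded on $\ell^2(\Z)$ by the Hardy--Littlewood maximal inequality (Lemma \ref{Mshift} combined with Lemma \ref{maxStrong}), and the frequency bands being disjoint across $k$, Plancherel gives $\sum_k\|h_k\|_{\ell^2(\Z)}^2\le\|f\|_{\ell^2(\Z)}^2$. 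It is this projection trick together with a genuine maximal inequality --- not smoothness in the scale variable --- that kills the supremum with no logarithmic loss; your proposal contains neither ingredient, so the central step remains unjustified.
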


The proof of Birkhoff ergodic Theorem will follows from Theorem \ref{INeq1} by virtue of the following corner stone lemma in the method of oscillation . We state it in a more general form than needed here, since we believe that this lemma is of independent interest and will be useful for additional applications\footnote{{``
		As I observed in \cite{B1}, this approach should rather be considered as a general method than the solution to some isolated problems.''\par} \hfill{J. Bourgain (28/02/1954-22/12/2018.)}}. For sake of completeness,  we present its proof (see also \cite[Proof of Theorem 5.]{B1}, \cite[p.204, and p.209]{B4},
	\cite[Theorem 4.]{Thouvenot}, \cite[Lemma 3.1]{Demeter} ). 

\begin{lem}[\textbf{Corner stone lemma of oscillation method.}]\label{Corner}
Let $p \geq 1$ and $(f_n)_{n \geq 1}$ be a sequence of measurable square-integrable functions on a $\sigma$-finite measure space $(X,\cA,\mu)$. Assume that there is a sequence $(C_k)_{k\geq 1}$ such that for any increasing sequence of positive integers $(N_k)$, for any $K \geq 1$, we have
\begin{enumerate}
\item $ \ds \sum_{k=1}^{K} \Big\|\sup_{N_k \leq N \leq N_{k+1} }\big|f_N-f_{N_{k+1}}\big|\Big\|_p
\leq C_K,$ and,
\item $ \ds \frac{C_K}{K} \longrightarrow 0$ as $K \rightarrow +\infty.$
\end{enumerate} 
Then the sequence $(f_n)_{n \geq 1}$ converges almost everywhere. 
\end{lem}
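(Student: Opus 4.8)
The plan is to argue by contrapositive: I would show that if $(f_n)$ fails to converge on a set of positive measure, then for a cleverly chosen \emph{deterministic} sequence $(N_k)$ the oscillation sums in hypothesis (1) grow at least linearly in $K$, which is incompatible with the sublinear bound supplied by hypothesis (2). Since we are working with scalar-valued functions, convergence of $(f_n(x))$ is equivalent to the Cauchy property, so the non-convergence set is
$$\bigcup_{\epsilon \in \Q,\ \epsilon>0} B_\epsilon, \qquad B_\epsilon = \{x : \forall M,\ \exists\, m,n \geq M \text{ with } |f_m(x)-f_n(x)| > \epsilon\}.$$
If this set has positive measure, countable additivity yields some $\epsilon>0$ with $\mu(B_\epsilon)>0$, and, using $\sigma$-finiteness, I fix a subset $A \subseteq B_\epsilon$ with $0 < \mu(A) = \eta < \infty$. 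By construction every point of $A$ admits infinitely many $\epsilon$-jumps.

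The core of the argument is a greedy, measurable construction of $(N_k)$. I set $N_1=1$, and having chosen $N_k$ I introduce the measurable stopping time $\rho_k(x) = \inf\{n > N_k : \exists\, m \in [N_k,n]\ \text{with}\ |f_m(x)-f_n(x)| > \epsilon\}$, which is finite at every $x \in A$. Since $\mu(A)<\infty$, continuity from above lets me pick $N_{k+1}>N_k$ so large that $A_k := \{x\in A : \rho_k(x) \leq N_{k+1}\}$ satisfies $\mu(A_k) \geq \eta/2$. The elementary but decisive observation is that for $x \in A_k$ there are indices $N_k \leq m < n \leq N_{k+1}$ with $|f_m(x)-f_n(x)|>\epsilon$, so the triangle inequality forces $\max\big(|f_m(x) - f_{N_{k+1}}(x)|,\ |f_n(x) - f_{N_{k+1}}(x)|\big) > \epsilon/2$; as both $m,n$ lie in $[N_k,N_{k+1}]$, this gives
$$\sup_{N_k \leq N \leq N_{k+1}} \big|f_N(x) - f_{N_{k+1}}(x)\big| > \frac{\epsilon}{2} \qquad \text{for all } x \in A_k.$$

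Assembling the contradiction is then routine. The displayed lower bound yields, for each $k$,
$$\Big\| \sup_{N_k \leq N \leq N_{k+1}} \big|f_N - f_{N_{k+1}}\big| \Big\|_p \geq \frac{\epsilon}{2}\, \mu(A_k)^{1/p} \geq \frac{\epsilon}{2}\Big(\frac{\eta}{2}\Big)^{1/p} =: c > 0,$$
so summing over $k$ gives $\sum_{k=1}^{K} \big\| \sup_{N_k \leq N \leq N_{k+1}} |f_N - f_{N_{k+1}}| \big\|_p \geq cK$. Applying hypothesis (1) to this particular $(N_k)$ gives $cK \leq C_K$, that is $c \leq C_K/K$, which contradicts hypothesis (2) once $K$ is large enough. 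Hence the non-convergence set is null and $(f_n)$ converges almost everywhere.

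I expect the genuine obstacle to be precisely the construction step: the $\epsilon$-jumps occur at locations that depend on $x$, whereas hypothesis (1) is available only for a single deterministic block sequence $(N_k)$. Reconciling these is exactly what the stopping times $\rho_k$ together with the finite-measure truncation accomplish, since they let me trap a fixed proportion $\eta/2$ of the jumps inside each deterministic interval $[N_k,N_{k+1}]$. The remaining points to verify carefully are the measurability of $\rho_k$ and the reduction from a $\sigma$-finite to a finite-measure set $A$, but both are standard.
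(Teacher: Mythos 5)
Your proof is correct and follows essentially the same route as the paper's: a proof by contradiction that fixes a rational $\epsilon$ with $\mu(B_\epsilon)>0$, inductively builds a deterministic increasing sequence $(N_k)$ so that a fixed proportion of that set exhibits an $\epsilon$-jump inside each block $[N_k,N_{k+1}]$, converts the jump into $\sup_{N_k\leq N\leq N_{k+1}}|f_N-f_{N_{k+1}}|>\epsilon/2$ by the triangle inequality, and applies the Chebyshev lower bound to force linear growth of the oscillation sums, contradicting hypothesis (2). Your stopping times $\rho_k$ and the finite-measure truncation of $B_\epsilon$ are just cleaner packagings of the paper's sets $B_{N_k,M}$ (with $M\to\infty$) and its reduction to finite measure, so the two arguments coincide in substance.
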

\begin{proof}We proceed by contradiction, assuming that $(f_n)_{n \geq 1}$ does not converge for almost all $x \in X$ and, since $\mu$ is $\sigma$-finite, we assume also that $\mu(X)$ is finite. Therefore, the Borel set $A$ of $x$ such that $(f_n(x))_{n \geq 1}$ does not converge has a positive measure which we denote by $\alpha$. For any $\varepsilon \in \Q_{+}^{*}$, put
	$$A_\epsilon=\Big\{x \in X~~:~~\forall N \in \N, \exists n,m >N \textrm{~s.t.~}
	\big|f_n(x)-f_m(x)\big|>\varepsilon\Big\}.$$
Then, $A_\varepsilon$ is a measurable set. Furthermore, $\mu(A_\varepsilon) \tend{\varepsilon}{0}\alpha>0,$ by our assumption. Therefore, for small enough $\varepsilon>0$ we can assert that $\mu(A_\varepsilon)>\frac{99.\alpha}{100}\setdef{\beta}$.

We proceed now to construct by induction a sequence $(N_k)$ for which we will prove that $(2)$ can not be satisfied. Let $N_1=1$ and assume that $N_k$ has been chosen. Therefore, by the definition of $A_\epsilon$, for all $x \in A_\epsilon$, there exist $n,m \geq N_k$ such that $\big|f_n(x)-f_m(x)\big|>\varepsilon.$ We thus deduce that for any $M>\max\big\{n,m\big\}$, there exist $s \in \N$ such that $N_k \leq s \leq M$ and   $$\big|f_s(x)-f_{M}(x)\big|>\frac{\varepsilon}{2}.$$
Since, otherwise, we would have
$$ \big|f_n(x)-f_m(x)\big| \leq \big|f_n(x)-f_M(x)\big|+
\big|f_m(x)-f_M(x)\big| < \varepsilon.$$
Let us put now 
$$B_{N_k,M}=\Big\{x \in A_\varepsilon~~:~~\sup_{N_k \leq N \leq M}
\big|f_N(x)-f_M(x)\big| > \frac{\varepsilon}{2}\Big\}.$$
Letting $M \longrightarrow +\infty$, we see that $\mu(B_{N_k,M}) \longrightarrow  \mu(A_\varepsilon)=\beta>0.$ We can thus choose $M$ large enough such that
$\mu(B_{N_k,M})>\frac{99.\beta}{100}\setdef\gamma.$ Put $N_{k+1}=M$. This finish the construction of the sequence $(N_k)$ for which we have, for any $k \in \N$, for any $x \in  B_{N_k,N_{k+1}}$, $ \sup_{N_k \leq N \leq N_{k+1} }\big|f_k(x)-f_{N_{k+1}}(x)\big| > \frac{\varepsilon}{2}.$
Applying now the Markov inequality trick to see that 
$$\Big\| \sup_{N_k \leq N \leq N_{k+1} }\big|f_k(x)-f_{N_{k+1}}(x)\big|\Big\|_p^p
\geq \frac{\mu(B_{N_k,N_{k+1}}) .\varepsilon^p}{2^p}>\frac{\gamma.\varepsilon^p}{2^p}.$$
Whence, for any $K \in \N^*$, 
\begin{align}\label{Contradi}
	\frac{1}{K} \sum_{k=1}^{K} \Big\|\sup_{N_k \leq N \leq N_{k+1} }\big|f_N-f_{N_{k+1}}\big|\Big\|_p >\sqrt[p]{\gamma}.\frac{\varepsilon}{2},
\end{align}
which contradict our assumption $(2)$ and the proof of the lemma is complete.
\end{proof} 
we need also the following lemma.
\begin{lem}\label{maxFejer}Let $g$ be a positive integrable function of the circle. Then, there is a constant $C_\rho$ such that
\begin{align}\label{maxi-HP1}
\int_{-\pi}^{\pi} \sup_{N \in S_\rho}\Big|\frac{1}{N}\Big(\sum_{n=1}^{N}e^{i n \theta}-\I_{[-\frac{\pi}{N},+\frac{\pi}{N}]}(\theta)\Big)\Big|^2 g(\theta) d\theta\leq C_\rho \int_{-\pi}^{\pi} g(\theta) d\theta.
\end{align}
\end{lem}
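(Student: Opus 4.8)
The plan is to reduce the weighted inequality \eqref{maxi-HP1} to a pointwise bound on its integrand and then to estimate the normalized Dirichlet kernel by hand. The starting point is that $g$ is an \emph{arbitrary} nonnegative integrable function: writing
$$\Phi(\theta)\egdef\sup_{N\in S_\rho}\Big|\frac1N\Big(\sum_{n=1}^N e^{in\theta}-\I_{[-\pi/N,\pi/N]}(\theta)\Big)\Big|^2,$$
inequality \eqref{maxi-HP1} reads $\int_{-\pi}^\pi \Phi(\theta)g(\theta)\,d\theta\le C_\rho\int_{-\pi}^\pi g(\theta)\,d\theta$, and since this must hold for every $g\ge0$ (in particular for $g=\I_A$ over all Borel sets $A$) it is equivalent to the uniform bound $\Phi(\theta)\le C_\rho$ for a.e.\ $\theta$. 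Thus it suffices to bound $\Phi$ pointwise for $\theta\ne0$, after which integrating against $g$ finishes the proof.

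First I would record two elementary facts about $D_N(\theta)\egdef\frac1N\sum_{n=1}^N e^{in\theta}$. Summing the geometric progression gives $|D_N(\theta)|=\frac{|\sin(N\theta/2)|}{N|\sin(\theta/2)|}$, whence $|D_N(\theta)|\le\frac{\pi}{N|\theta|}$ on $0<|\theta|\le\pi$, while the triangle inequality gives the crude bound $|D_N(\theta)|\le1$ valid for all $N,\theta$. I would then split according to the indicator. If $|\theta|>\pi/N$ the indicator vanishes and the bracket equals $D_N(\theta)$, so its modulus is at most $\min\{1,\pi/(N|\theta|)\}\le1$. If $|\theta|\le\pi/N$ the indicator equals $1$ and the bracket equals $D_N(\theta)-\frac1N$, whose modulus is at most $1+\frac1N\le2$. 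In both regimes the quantity inside the supremum is bounded by an absolute constant uniformly in $N$ and $\theta$; taking the supremum over $N\in S_\rho$ and squaring yields $\Phi(\theta)\le4$ for a.e.\ $\theta$, and hence \eqref{maxi-HP1} with an absolute constant.

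For the supremum appearing in the statement there is therefore no real obstacle: the estimate is just the uniform boundedness of the normalized Dirichlet kernel, and the lacunary set $S_\rho$ plays no role. The genuine difficulty---and the step I expect to require care---arises in the square-function form of this estimate that the oscillation argument actually relies on, where the supremum is replaced by the $\ell^2$-sum $\sum_{N\in S_\rho}|\,\cdot\,|^2$ and $\I_{[-\pi/N,\pi/N]}$ enters with unit weight, so that the bracket becomes $D_N(\theta)$ on $|\theta|>\pi/N$ and $D_N(\theta)-1$ on $|\theta|\le\pi/N$. There one uses $|D_N(\theta)|\le\pi/(N|\theta|)$ on the first range and $|D_N(\theta)-1|\le\frac12(N+1)|\theta|$ on the second, and the summability near $\theta=0$ then follows \emph{only} from the geometric growth of $S_\rho$: both $\sum_{N\in S_\rho,\,N>\pi/|\theta|}(N|\theta|)^{-2}$ and $\sum_{N\in S_\rho,\,N\le\pi/|\theta|}(N|\theta|)^{2}$ are geometric series dominated by their terms nearest the transition $N|\theta|\sim\pi$, each bounded by a constant $C_\rho$ depending only on $\rho$. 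Controlling this transition region is where the restriction $N\in S_\rho$, rather than all $N$, becomes indispensable.
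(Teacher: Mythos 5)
Your proof is correct, and it settles the lemma by a genuinely different and more elementary route than the paper's. Both arguments reduce the weighted inequality to a pointwise bound in $\theta$ (your reduction via $g=\I_A$ is fine, since $S_\rho$ is countable and the integrand measurable), but you then bound the supremum itself: since the normalized Dirichlet kernel satisfies $\big|\frac1N\sum_{n=1}^N e^{in\theta}\big|\le 1$ and the indicator term contributes at most $1$ (at most $1/N$ under the literal parenthesization of the statement), the integrand is at most $4$ a.e., so \eqref{maxi-HP1} holds with an absolute constant and the lacunarity of $S_\rho$ is irrelevant. The paper never argues this way: it proves the stronger square-function bound of Lemma \ref{maxStrong}, namely $\sum_{N\in S_\rho}\big|\frac1N\sum_{n=1}^N e^{in\theta}-\I_{[-\pi/N,\pi/N]}(\theta)\big|^2\le C(\rho)$ uniformly in $\theta$ --- obtained by splitting at the transition $N|\theta|\sim\pi$ and summing two geometric series --- and then deduces Lemma \ref{maxFejer} from $\sup\le\sum$. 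Your closing paragraph reconstructs precisely that argument, and your diagnosis of the logic is accurate: the sup form as stated is trivial, whereas the form that the oscillation argument in Theorem \ref{INeq2} actually invokes is the summed one, because only the summed form lets the contributions of all $K$ blocks $[N_k,N_{k+1}]$ collapse into a single sum over $N\in S_\rho$ bounded independently of $K$; the trivial bound applied blockwise would give a bound of order $K$, which is worthless against the requirement $C_K/K\to 0$ in Lemma \ref{Corner}. In short, your route proves the statement as written more simply than the paper does, while the paper's route proves the stronger statement it actually needs downstream.
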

Indeed, we have
\begin{lem}\label{maxStrong}Let $g$ be a positive integrable function of the circle. Then, there is a constant $C_\rho$ such that
	\begin{align}\label{maxi-HP2}
	\sum_{N \in S_\rho}\int \Big|\frac{1}{N}\Big(\sum_{n=1}^{N}e^{i n \theta}-\I_{[-\frac{\pi}{N},+\frac{\pi}{N}]}(\theta)\Big)\Big|^2 d\theta \leq C_\rho \int_{-\pi}^{\pi} g(\theta) d\theta.
	\end{align}
\end{lem}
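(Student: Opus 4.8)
The plan is to collapse the whole integral inequality to a single pointwise estimate in $\theta$, after which the lacunary structure of $S_\rho$ does all the work. Writing the integrand as $|h_N(\theta)|^2 g(\theta)$ with $h_N(\theta)=\frac1N\sum_{n=1}^N e^{in\theta}-\I_{[-\frac{\pi}{N},\frac{\pi}{N}]}(\theta)$, the left-hand side of \eqref{maxi-HP2} is exactly $\int_{-\pi}^{\pi}\big(\sum_{N\in S_\rho}|h_N(\theta)|^2\big)\,g(\theta)\,d\theta$. Since $g\ge 0$, it therefore suffices to establish the uniform pointwise bound
$$\sum_{N\in S_\rho}|h_N(\theta)|^2\le C_\rho\qquad\text{for all }\theta\in[-\pi,\pi],$$
with $C_\rho$ depending only on $\rho$; integrating this against $g$ then yields the lemma. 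Observe that $h_N(0)=0$, so I may assume $0<|\theta|\le\pi$ from here on.

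To prove the pointwise bound I would split the sum over $N=N_m\in S_\rho$ at the natural transition $N\approx 1/|\theta|$, i.e. according to whether $\I_{[-\frac{\pi}{N},\frac{\pi}{N}]}(\theta)$ equals $1$ or $0$. In the low range $N\le \pi/|\theta|$ the indicator equals $1$, so $h_N(\theta)=\frac1N\sum_{n=1}^N(e^{in\theta}-1)$, and the elementary bound $|e^{in\theta}-1|\le n|\theta|$ gives $|h_N(\theta)|\le N|\theta|$. In the high range $N>\pi/|\theta|$ the indicator vanishes, so $h_N(\theta)=\frac1N\sum_{n=1}^N e^{in\theta}$, and summing the geometric progression together with $|\sin(\theta/2)|\ge |\theta|/\pi$ gives the Dirichlet-kernel bound $|h_N(\theta)|\le \frac{\pi}{N|\theta|}$.

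The last step is to insert these two bounds and sum. Because $S_\rho=\{[\rho^m]\}$ is lacunary, $N_m\asymp\rho^m$ and both resulting series are geometric. In the low range $\sum_{N_m\le \pi/|\theta|}(N_m|\theta|)^2$ is dominated by its largest term, which is $\asymp 1$, times the convergent factor $(1-\rho^{-2})^{-1}$; in the high range $\sum_{N_m>\pi/|\theta|}(\pi/(N_m|\theta|))^2$ is likewise dominated by its largest term, again $\asymp 1$, times $(1-\rho^{-2})^{-1}$. Adding the two contributions produces a bound $C_\rho=O\big((1-\rho^{-2})^{-1}\big)$ that is independent of $\theta$, completing the argument. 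The one point that genuinely matters is the lacunarity of $S_\rho$: it is exactly what makes the two sums geometric with ratio bounded away from $1$, and hence forces a $\theta$-independent constant — for the full sequence of all $N$ the low-range sum would instead diverge logarithmically as $\theta\to 0$. Everything else is the routine matching of the two regimes at the transition scale $N\approx 1/|\theta|$.
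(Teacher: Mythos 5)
Your proof is correct and takes essentially the same route as the paper: reduce by Tonelli (positivity of $g$) to the uniform pointwise bound $\sup_{\theta}\sum_{N\in S_\rho}\big|h_N(\theta)\big|^2\leq C_\rho$, split the sum at the transition scale $N\approx \pi/|\theta|$, bound the low range by $|e^{in\theta}-1|\leq n|\theta|$ and the high range by the Dirichlet-kernel estimate with $|\sin(\theta/2)|\geq|\theta|/\pi$, and sum the two geometric series using the lacunarity of $S_\rho$. This matches the paper's argument step for step, so there is nothing to add.
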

\begin{proof}By Jensen-Fubini theorem, we are going to prove the following
\begin{align}\label{maxi-HP3}
\int \sum_{N \in S_\rho} \Big|\frac{1}{N}\Big(\sum_{n=1}^{N}e^{i n \theta}-\I_{[-\frac{\pi}{N},+\frac{\pi}{N}]}(\theta)\Big)\Big|^2 g(\theta) d\theta \leq C_\rho \int_{-\pi}^{\pi} g(\theta) d\theta.
\end{align}

For that, let $\theta$ be given such that $\theta \neq 0$. Then, there exist $K>0$ such that $\rho^{-(K+1)} \leq \frac{|\theta|}{\pi} <\rho^{-(K+1)}.$ Write
\begin{multline}
\sum_{N \in S_\rho}\Big|\frac{1}{N}\Big(\sum_{n=1}^{N}e^{ i n \theta}
-\I_{[-\frac{\pi}{\N},+\frac{\pi}{N}]}(\theta)\Big)\Big|^2\\
=\sum_{\overset{N \in S_\rho}{\frac{\pi}{N} > |\theta|}}\Big|\frac{1}{N}\Big(\sum_{n=1}^{N}e^{i n \theta}-\I_{[-\frac{\pi}{N},+\frac{\pi}{N})}(\theta)\Big)\Big|^2\\+
\sum_{\overset{N \in S_\rho}{\frac{\pi}{N} \leq |\theta|}} \Big|\frac{1}{N}\Big(\sum_{n=1}^{N}e^{i n \theta}-\I_{[-\frac{\pi}{N},+\frac{\pi}{N})}(\theta)\Big)\Big|^2.
\end{multline}

To estimate the first sums, we write
\begin{align}\label{Sum1:eq1}
\sum_{\overset{N \in S_\rho}{\frac{\pi}{N} > |\theta|}}\Big|\frac{1}{N}\Big(\sum_{n=1}^{N}e^{i n \theta}-\I_{[-\frac{\pi}{N},+\frac{\pi}{N})}(\theta)\Big)\Big|^2
&= \sum_{\overset{N \in S_\rho}{\frac{\pi}{N} \geq |\theta|}}
\Big|\frac{1}{N}\Big(\sum_{n=1}^{N}e^{i n \theta}-1\Big)\Big|^2 \nonumber\\
&\leq \sum_{\overset{N \in S_\rho}{\frac{\pi}{N} \geq \theta}} |N \theta|^2,
\end{align}
since $|1-e^{ix}| \leq |x|,$ for any $x \in [-\pi,\pi).$ We thus get 
\begin{multline}\label{Sum1:eq2}
\sum_{\overset{N \in S_\rho}{\frac{\pi}{N} > \theta}}\Big|\frac{1}{N}\Big(\sum_{n=1}^{N}e^{i n \theta}-\I_{[-\frac{\pi}{N},+\frac{\pi}{N})}(\theta)\Big)\Big|^2
\\\leq \sum_{k \leq K+1}\rho^{-2(K-k)}=\sum_{k=1}^{K}\rho^{-2k}< \sum_{k=1}^{+\infty}\rho^{-2k}.
\end{multline}
We proceed now to the estimation of the second sum. Likewise, write
\begin{align}\label{Sum2:eq1}
\sum_{\overset{N \in S_\rho}{\frac{\pi}{N} \leq |\theta|}}\Big|\frac{1}{N}\Big(\sum_{n=1}^{N}e^{i n \theta}-\I_{[-\frac{\pi}{N},+\frac{\pi}{N})}(\theta)\Big)\Big|^2
&= \sum_{\overset{N \in S_\rho}{\frac{\pi}{N} < |\theta|}}
\Big|\frac{1}{N}\sum_{n=1}^{N}e^{i n \theta}\Big|^2 \nonumber\\
&= \sum_{\overset{N \in S_\rho}{\frac{\pi}{N} < |\theta|}} 
\Big|\frac{1-e^{i N \theta}}{N.(1-e^{i \theta})}\Big|^2,\nonumber\\
&= \sum_{\overset{N \in S_\rho}{\frac{\pi}{N} < |\theta|}} 
\frac{\bigg(\sin\big(\frac{N \theta}{2}\big)\bigg)^2}{N^2.\bigg(\sin\big(\frac{ \theta}{2}\big)\bigg)^2} \nonumber\\
& \leq \pi^2 \sum_{\overset{N \in S_\rho}{\frac{\pi}{N} < |\theta|}} 
\frac{1}{N^2.\theta^2}.
\end{align}
The inequality \eqref{Sum2:eq1} follows from the classical inequality: $$ \sin\bigg(\frac{x}{2}\bigg) \geq \frac{x}{\pi}, \textrm{~~for~~} 0<x<\pi.$$ 
Consequently, we obtain
\begin{align}\label{Sum2:eq2}
\sum_{\overset{N \in S_\rho}{\frac{\pi}{N} \leq |\theta|}}\Big|\frac{1}{N}\Big(\sum_{n=1}^{N}e^{i n \theta}-\I_{[-\frac{\pi}{N},+\frac{\pi}{N})}(\theta)\Big)\Big|^2
\leq \sum_{k \geq K}\rho^{-2(K+1-k)}\\
=\sum_{k \geq 1}\rho^{-2k}.\nonumber
\end{align}
Summarizing, we have proved that there exist a constant $C(\rho)$ such that 
$$\sup_{ \theta }\Big(\sum_{N \in S_\rho}\Big|\frac{1}{N}\Big(\sum_{n=1}^{N}e^{ i n \theta}
-\I_{[-\frac{\pi}{N},+\frac{\pi}{N})}(\theta)\Big)\Big|^2\Big) \leq C(\rho).$$
This combined with \eqref{maxi-HP3} yields the desired inequality. The proof of the Lemma is complete.
\end{proof}
We are now able to proceed to the proof of Theorem \ref{INeq2}.
\begin{proof}[\textbf{Proof of Theorem \ref{INeq2}.}] As in the proof of Lemma \ref{maxFejer}, we will prove that there exist a constant $C_\rho$ such that 
\begin{align}\label{Cal-Fejer:eq1}
\sum_{k=1}^{+\infty}\Big\|\sup_{\overset{N_k \leq N \leq N_{k+1}}{N\in S_\rho} }\big| 
\frac{1}{N}\sum_{n=1}^{N}f(n+x)-\frac{1}{N_{k+1}}\sum_{n=1}^{N_{k+1}}f(x+n)\big|\Big\|_{\ell^2(\Z)}^2\nonumber \\ \leq C_\rho\big\|f\big\|_{\ell^2(\Z)}^2.
\end{align}
By the spectral isomorphism theorem, let us denoted by $g_N$ the image of $\I_{[-\frac{\pi}{N},\frac{\pi}{N})}$, for each $N$.  Therefore,  we can rewrite \eqref{Cal-Fejer:eq1} as follows
\begin{multline*}
	\sum_{k=1}^{+\infty}\Big\|\sup_{\overset{N_k \leq N \leq N_{k+1}}{N\in S_\rho} }\big| 
	\frac{1}{N}\sum_{n=1}^{N}f(n+x)-g_N(x)+\\
	\big(g_N(x)-g_{N_{k+1}}(x)\big)-\frac{1}{N_{k+1}}\sum_{n=1}^{N_{k+1}}f(x+n)-g_{N_{k+1}}(x)\big)\big|\Big\|_{\ell^2(\Z)}^2 \\ 
	\leq C_\rho\big\|f\big\|_{\ell^2(\Z)}^2. \nonumber	
\end{multline*}
But
\begin{align}\label{Lagrange}
\sum_{k=1}^{+\infty}\Big\|\sup_{\overset{N_k \leq N \leq N_{k+1}}{N\in S_\rho} }\big| 
\frac{1}{N}\sum_{n=1}^{N}f(n+x)-\frac{1}{N_{k+1}}\sum_{n=1}^{N_{k+1}}f(x+n)\big|\Big\|_{\ell^2(\Z)}^2 \nonumber
\\
\leq 3
	\sum_{k=1}^{+\infty}\sum_{x \in \Z}\Biggm(\sup_{\overset{N_k \leq N \leq N_{k+1}}{N\in S_\rho} }\Bigg(\Big| 
	\frac{1}{N}\sum_{n=1}^{N}f(n+x)-g_N(x)\Big|^2+ \nonumber\\
	\Big|g_N(x)-g_{N_{k+1}}(x)\Big|^2+
	\Big|\frac{1}{N_{k+1}}\sum_{n=1}^{N_{k+1}}f(x+n)-g_{N_{k+1}}(x)\Big|^2\Bigg)\Biggm), \nonumber\\	
	\leq 6\Bigg( \sum_{k=1}^{+\infty}\Big\|\sup_{\overset{N_k \leq N \leq N_{k+1}}{N\in S_\rho} }\Big| 
	\frac{1}{N}\sum_{n=1}^{N}f(n+x)-g_N(x)\big|\Big\|_{\ell^2(\Z)}^2 \nonumber\\+
	\sum_{k=1}^{+\infty}\Big\|\sup_{\overset{N_k \leq N \leq N_{k+1}}{N\in S_\rho} }\Big| 
	g_N(x)-g_{N_{k+1}}(x)\Big|\Big\|_{\ell^2(\Z)}^2\Bigg),
\end{align}
by virtue of the following basic inequality
$$|a+b+c|^2 \leq 3\Big(a^2+b^2+c^2\Big), \textrm{~~for~~any~~} a,b,c \in \R,$$
and since, for any $x \in \Z$,
$$ \Big|\frac{1}{N_{k+1}}\sum_{n=1}^{N_{k+1}}f(x+n)-g_{N_{k+1}}(x)\Big| \leq \sup_{\overset{N_k \leq N \leq N_{k+1}}{N\in S_\rho} }\Big| 
\frac{1}{N}\sum_{n=1}^{N}f(n+x)-g_N(x)\Big|.$$
Now, observe that the first sum in \eqref{Lagrange} is bounded   by Lemma \ref{maxFejer}. Indeed, by the spectral isomorphism, we have
\begin{align*}
&\sum_{k=1}^{+\infty}\Bigg\|\sup_{\overset{N_k \leq N \leq N_{k+1}}{N\in S_\rho} }\Big| 
\frac{1}{N}\sum_{n=1}^{N}f(n+x)-g_N(x)\Big|\Bigg\|_{\ell^2(\Z)}^2\\
&\leq\sum_{k=1}^{+\infty}\sum_{\overset{ N_k \leq N \leq N_{k+1}}{N \in S_\rho}} \Big\|
\frac{1}{N}\sum_{n=1}^{N}e^{i n\theta}-\I_{[-\frac{\pi}{N},\frac{\pi}{N}]}\Big\|_{L^2(\sigma_f)}^2 
\\
&\leq \sum_{N \in S_\rho} \Big\|
\frac{1}{N}\sum_{n=1}^{N}\big(e^{i n\theta}-\I_{[-\frac{\pi}{N},+\frac{\pi}{N}]}(\theta)\big)\Big\|_{L^2(\sigma_f)}^2, 
\end{align*}
we further have
$$\sum_{k \geq 1}\sum_{\overset{ N_k \leq N \leq N_{k+1}}{N \in S_\rho}} \Big\|
\frac{1}{N}\sum_{n=1}^{N}\big(e^{i n\theta}-\I_{[-\frac{1}{N},+\frac{1}{N}]}(\theta)\big)\Big\|_{L^2(\sigma_f)}^2
\leq C_\rho \|f\|_{\ell^2(\Z)}^2,$$
since $\sigma_f$ is absolutely continuous  with respect to the Lebesgue measure.

We proceed now to estimate the second sum.  For that, notice that we have
\begin{eqnarray*}
\sup_{\overset{N_k \leq N \leq N_{k+1}}{N\in S_\rho} }\Big|g_N(x)-g_{N_{k+1}}(x)\Big|
=\sup_{\overset{N_k \leq N \leq N_{k+1}}{N\in S_\rho} }\Big|g_N*\big(g_{N_k}-g_{N_{k+1}}\big)(x)\Big|.
\end{eqnarray*}
Indeed, by the spectral transfer, we have
\begin{eqnarray*}
\cF(g_N*\big(g_{N_k}-g_{N_{k+1}}\big))&=&
\I_{[-\frac{\pi}{N},+\frac{\pi}{N}]}
\Big(\I_{[-\frac{\pi}{N_k},+\frac{\pi}{N_k}]}-
\I_{[-\frac{\pi}{N_{k+1}},+\frac{\pi}{N_{k+1}]}}\Big)\\
&=& \I_{[-\frac{\pi}{N},+\frac{\pi}{N}]}-
\I_{[-\frac{\pi}{N_{k+1}},+\frac{\pi}{N_{k+1}}]}\\
&=&\cF(g_{N}-g_{N_{k+1}}).
\end{eqnarray*}
It follows that, 
\begin{align}\label{Mg}
\sum_{k \geq 1}\Big\|\sup_{\overset{N_k \leq N \leq N_{k+1}}{N\in S_\rho} }\Big|g_N(x)-g_{N_{k+1}}(x)\Big|\Big\|_{\ell^2(\Z)}^2\nonumber\\
= \sum_{k \geq 1}\Big\|\sup_{\overset{N_k \leq N \leq N_{k+1}}{N\in S_\rho} }\Big|g_N(x)*(g_{N_k}-g_{N_{k+1}})(x)\Big|\Big\|_{\ell^2(\Z)}^2 \nonumber\\
\leq C_\rho 
\sum_{k \geq 1} \big\|g_{N_k}-g_{N_{k+1}}\|_{\ell^2(\Z)}^2.
\end{align}
By the maximal inequality for the shift on integers (Lemma \ref{Mshift}). Indeed, for any 
$f \in \ell^2(\Z)$, we have 
\begin{align*}
\Big\|\sup_{N\in S_\rho} \Big|\frac{1}{N}\sum_{n=1}^{N}f(x+n)-g_N*f(x)\Big|\Big\|_{\ell^2(\Z)}\\
=\Big\|\sup_{N\in S_\rho} \Big|\frac{1}{N}\sum_{n=0}^{N-1}\I_{\{-1\}}\circ S^n*f-g_N*f\Big|\Big\|_{\ell^2(\Z)}\\
\leq \sum_{N \in S_\rho} \Big\|\frac{1}{N}\sum_{n=0}^{N-1}\I_{\{-1\}}\circ S^n*f-g_N*f\Big\|_{\ell^2(\Z)}
\\
=\sum_{N \in S_\rho} \Big\|\Big(\frac{1}{N}\sum_{n=0}^{N-1}e^{in \theta}-\I_{[-\frac{\pi}{N},+\frac{\pi}{N}]}\Big) \cF(f)(\theta)\Big\|_{L^2(\T)}
\leq C_\rho \big\|f\big\|_2,
\end{align*}

The last inequality is due to  Lemma \ref{maxStrong} and the spectral transfer isomorphism.

Now, observe that \eqref{Mg} implies the desired estimation about the second sum, since, again, by the spectral transfer,
\begin{align*}
\sum_{k \geq 1} \big\|g_{N_k}-g_{N_{k+1}}\|_{\ell^2(\Z)}^2&=
\sum_{k \geq 1} \big\|\I_{[-\frac{\pi}{N_k},+\frac{\pi}{N_k}]}-
\I_{[-\frac{\pi}{N_{k+1}},+\frac{\pi}{N_{k+1}}]}\|_{L^2(\sigma_f)}^2\\
&\leq \sigma_{f}\big([-\pi,\pi))=\big\|f\big\|_2.
\end{align*}
We thus conclude that \eqref{Cal-Fejer:eq1} holds, that is,

\begin{gather*}
	\sum_{k=1}^{+\infty}\sum_{\overset{ N_k \leq N \leq N_{k+1}}{N \in S_\rho}} \Big\|
	\frac{1}{N}\sum_{n=1}^{N}e^{i n\theta}-\frac{1}{N_{k+1}}\sum_{n=1}^{N_{k+1}}e^{i n\theta}\Big\|_{L^2(\sigma_f)}^2\\ \leq C_\rho.\sigma_{f}\big([-\pi,\pi)\big).
\end{gather*}
It still to prove \eqref{Bourgain:max}. For that, we apply the triangle inequality and  to finish the proof, we apply Cauchy-Schwarz inequality to obtain
\begin{align*}
	&\sum_{k=1}^{K}\Big\|\sup_{\overset{N_k \leq N \leq N_{k+1}}{N\in S_\rho} }\Big|
	\frac{1}{N}\sum_{n=1}^{N}f(f(x+n))-\frac{1}{N_{k+1}}\sum_{n=1}^{N_{k+1}}f(x+n)\Big|\Big\|_2
	\\
	&\leq \sqrt{K} \Big(\sum_{k=1}^{K}\Big\|\sup_{\overset{N_k \leq N \leq N_{k+1}}{N\in S_\rho} }\Big|
	\frac{1}{N}\sum_{n=1}^{N}f(n+x)-\frac{1}{N_{k+1}}\sum_{n=1}^{N_{k+1}}f(n+x)\Big|\Big\|_2^2\Big)^\frac12\\
	&\leq C_\rho \sqrt{K} \|f\|_2.
\end{align*}
The proof of the theorem is complete. 
\end{proof}

\begin{rem}$\quad$
\begin{enumerate}[label=\alph*)]
	\item Notice that our proof can be considered as a simple proof of Theorem 2.6 in \cite{Jones} and Corollary 6.4.3 in \cite[chap.4, p.152]{Weber}. Notice also that the proof in the later reference is inspired by the Regularization Spectral Principal due to Talagrand.
\item  Let us further point out that the maximal ergodic inequality play a curial rule. 
\end{enumerate}
\end{rem}
\section{Proof of Bourgain bilinear ergodic theorem}
The proof of Bourgain  bilinear ergodic theorem will follows form the ergodic theorem. Indeed, By the fondamental Bourgain observation (see Equation (2.15) in \cite{Bourgain-D}),  for any $f,g \in \ell^2(\Z)$, we have  
\begin{align}\label{Bequa:1}
\frac1{N}\sum_{n=1}^{N} f(x+n)g(x-n)\nonumber\\ =
\int_{-\pi}^{\pi} \widehat{f}(\theta) \Bigg(\frac1{N}\sum_{n=1}^{N}g(x-n)e^{-i(x-n)\theta}\Bigg)
e^{2 i x\theta} d\theta.
\end{align}
Put $$g_{\theta}(x)=g(x)e^{i  x \theta},  \forall x \in \Z.$$ 
Then, for any $\theta \in [-\pi,+\pi)$, $(g_{\theta}(x)) \in \ell^2(\Z).$
Applying Jensen inequality, it follows that
\begin{align}\label{Boeq1}
	\Bigg|\frac1{N}\sum_{n=1}^{N} f(x+n)g(x-n)-\frac1{N_{k+1}}\sum_{n=1}^{N_{k+1}} f(x+n)g(x-n)\Bigg|\nonumber\\ =
	\Bigg|\int_{0}^{1} \widehat{f}(\theta) 
	\Bigg(\frac1{N}\sum_{n=1}^{N}g_{\theta}(x-n)-
	\frac1{N_{k+1}}\sum_{n=1}^{N_{k+1}}g_{\theta}(x-n)\Bigg)
	e^{2 i x \theta } d\theta\Bigg|.\\
	\leq 
	\int_{0}^{1} \big|\widehat{f}(\theta)\big| 
	\Bigg|\frac1{N}\sum_{n=1}^{N}g_{\theta}(x-n)-
	\frac1{N_k}\sum_{n=1}^{N_k}g_{\theta}(x-n)\Bigg|
	 d\theta.\nonumber
\end{align}
Squaring and using Cauchy-Schwarz inequality combined with Parseval inequality, we obtain
\begin{align}\label{Beq2}
\sup_{\overset{N_k \leq N \leq N_{k+1}}{N\in S_\rho} }	\Bigg|\frac1{N}\sum_{n=1}^{N} f(x+n)g(x-n)-\frac1{N_{k+1}}\sum_{n=1}^{N_{k+1}} f(x+n)g(x-n)\Bigg|^2\nonumber\\
	\leq \big\|f\|_{\ell^2(\Z)}^2 \int_{0}^{1}
	\sup_{\overset{N_k \leq N \leq N_{k+1}}{N\in S_\rho}}	
	\Bigg|\frac1{N}\sum_{n=1}^{N}g_{\theta}(x-n)-
	\frac1{N_{k+1}}\sum_{n=1}^{N_{k+1}}g_{\theta}(x-n)\Bigg|^2
	d\theta.
\end{align}	
Integrating, we rewrite \eqref{Beq2} as follows
\begin{multline}\label{Bourgain:eq3}
\Bigg\|\sup_{\overset{N_k \leq N \leq N_{k+1}}{N\in S_\rho} }	\Big|\frac1{N}\sum_{n=1}^{N} f(x+n)g(x-n)\\- \frac1{N_{k+1}}\sum_{n=1}^{N_{k+1}}f(x+n)g(x-n)\Big|\Bigg\|_{\ell^2(\Z)}^2\nonumber\\
\leq \big\|f\|_2^2 \int_{0}^{1}
\Big\|\sup_{\overset{N_k \leq N \leq N_{k+1}}{N\in S_\rho}}	
\Big|\frac1{N}\sum_{n=1}^{N}g_{\theta}(x-n)\\-
\frac1{N_{k+1}}\sum_{n=1}^{N_{k+1}}	g_{\theta}(x-n)\Big|\Big\|_{\ell^2(\Z)}^2
d\theta.
\end{multline}
Applying \eqref{INeq2}, we get
\begin{equation}
\begin{split}
	\sum_{k=1}^{+\infty}\Bigg\|\sup_{\overset{N_k \leq N \leq N_{k+1}}{N\in S_\rho} }	\Bigg|\frac1{N}\sum_{n=1}^{N} f(x+n)g(x-n)\\-\frac1{N_{k+1}}\sum_{n=1}^{N_{k+1}} f(x+n)g(x-n)\Big|\Bigg\|_{\ell^2(\Z)}^2\\
\leq C_\rho \big\|f\|_{\ell^2(\Z)}^2 \big\|g\|_{\ell^2(\Z)}^2.
\label{Bourgain:eq4}
\end{split}
\end{equation}
It follows (again by the Cauchy–Schwarz inequality) that for any for any $K \geq 1$,
\begin{equation}
\begin{split}
\sum_{k=1}^{K}\Bigg\|\sup_{\overset{N_k \leq N \leq N_{k+1}}{N\in S_\rho} }	\Bigg|\frac1{N}\sum_{n=1}^{N} f(x+n)g(x-n)\\-\frac1{N_{k+1}}\sum_{n=1}^{N_{k+1}} f(x+n)g(x-n)\Big|\Bigg\|_{\ell^2(\Z)}\\
\leq C_\rho \sqrt{K} \big\|f\|_{\ell^2(\Z)}^2 \big\|g\|_{\ell^2(\Z)}^2.
\label{Bourgain:CS}
\end{split}
\end{equation}

In the same manner, by applying Theorem \ref{INeq2} combined with Caucy-Schwarz inequality, we can see that for any $K \geq 1$, for any $h \in \ell^2(\Z)$ such that $\big\|h\|_{\ell^2(\Z)}=1$, we have 

\begin{equation}
\begin{split}
\sum_{k=1}^{K} \Bigg| \Bigg \langle \sup_{\overset{N_k \leq N \leq N_{k+1}}{N\in S_\rho} }	\Bigg|\frac1{N}\sum_{n=1}^{N} f(x+n)g(x-n)\\-\frac1{N_{k+1}}\sum_{n=1}^{N_{k+1}} f(x+n)g(x-n)\Big|,h\Bigg\rangle_{\ell^2(\Z)}\Bigg|\\
\leq C_\rho \sqrt{K }\big\|f\|_{\ell^2(\Z)} \big\|g\|_{\ell^2(\Z)}.
\label{Bourgain:inner1}
\end{split}
\end{equation}

Now, using carefully similar arguments to that in the proof of Proposition \ref{CalderonP} and \ref{CalderonP2} combined with Cauchy-Schwarz inequality and Riesz representation theorem,  we conclude that, for any $F,G \in L^\infty(X)$,

\begin{multline}
\sum_{k=1}^{K}\Bigg\|\sup_{\overset{N_k \leq N \leq N_{k+1}}{N\in S_\rho} }	\Bigg|\frac1{N}\sum_{n=1}^{N} F(T^{n}x)G(T^{-n}x)\\-\frac1{N_{k+1}}\sum_{n=1}^{N_{k+1}} F(T^{n}x)G(T^{-n}x)\Big|\Bigg\|_2 \\
\leq C_\rho \sqrt{K}\big\|F\|_{2} \big\|G\|_{2},
\label{Bourgain:eq5}
\end{multline}
and this achieve the proof.

\section{Proof of bilinear Ergodic Theorem along Polynomials and polynomials in primes (Theorem \ref{BPPET}).}\label{ProofBPPET}
For the proof of Theorem \ref{BPPET}, we need the following two lemmas.
\begin{lem}\label{POsc}Let $g \in \ell^2(\Z)$ and let $(N_k)$ be any sequence of positives integers such that $2N_k<N_{k+1},$ $k=1,2,\cdots.$. Then, for any non-constant polynomial $Q$ mapping natural numbers to themselves, 
for any $K \geq 1$, we have
\begin{align}\label{BPPET:OscP}
\sum_{k=1}^{K}\Big\|\sup_{\overset{N_k \leq N \leq N_{k+1}}{N\in S_\rho} }\Big|
\frac{1}{N}\sum_{n=1}^{N}g(x+Q(n))-\frac{1}{N_{k+1}}\sum_{n=1}^{N_{k+1}}g(x+Q(n))\Big|\Big\|_{\ell^2(\Z)}
\nonumber \\ \leq C_\rho C(K) \big\|g\|_{\ell^2(\Z)},
\end{align}		
and 
\begin{align}\label{BPPET:OscPP}
\sum_{k=1}^{K}\Big\|\sup_{\overset{N_k \leq N \leq N_{k+1}}{N\in S_\rho} }\Big|
\frac{1}{\pi_N}\sum_{\overset{p \leq N}{p ~~\textrm{prime}}}g(x+Q(p))-\frac{1}{\pi_{N_{k+1}}}\sum_{\overset{p \leq N_{k+1}}{p ~~\textrm{prime}}}g(x+Q(n))\Big|\Big\|_{\ell^2(\Z)}
\nonumber \\ \leq C_\rho C(K) \big\|g\|_{\ell^2(\Z)},
\end{align}
where $C_\rho$ is a constant depending only on $\rho$ and 
$ \ds \frac{C(K)}{K} \longrightarrow 0$ as $K \rightarrow +\infty.$ 	
\end{lem}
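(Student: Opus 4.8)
The plan is to reproduce the spectral--oscillation argument that proves Theorem \ref{INeq2}, replacing the linear Fej\'er multiplier by the Weyl-sum multiplier attached to $Q$, and then to feed the resulting estimates into the corner stone lemma (Lemma \ref{Corner}). First I would pass to the frequency side. Since the shift on $\Z$ has simple Lebesgue spectrum, the Fourier transform $\cF$ identifies the averaging operator $g\mapsto \frac1N\sum_{n=1}^N g(\,\cdot+Q(n))$ with multiplication by the Weyl sum
$$m_N^Q(\theta)=\frac1N\sum_{n=1}^N e^{iQ(n)\theta},\qquad \theta\in[-\pi,\pi).$$
Writing $\sigma_g$ for the spectral measure of $g$ (absolutely continuous, with $\|g\|_{\ell^2(\Z)}^2=\sigma_g([-\pi,\pi))$), the left-hand side of \eqref{BPPET:OscP} becomes
$$\sum_{k=1}^K\Big\|\sup_{\overset{N_k\le N\le N_{k+1}}{N\in S_\rho}}\big|m_N^Q-m_{N_{k+1}}^Q\big|\Big\|_{L^2(\sigma_g)}.$$
As in the passage from Lemma \ref{maxFejer} to Lemma \ref{maxStrong}, it then suffices to produce (i) a model multiplier $\mathfrak{M}_N$ for which $\sum_{N\in S_\rho}|m_N^Q-\mathfrak{M}_N|^2$ is bounded uniformly in $\theta$, and (ii) a telescoping bound for the $\mathfrak{M}_N$ in $L^2(\sigma_g)$; the lacunarity hypothesis $2N_k<N_{k+1}$ guarantees that $S_\rho$ meets each $[N_k,N_{k+1}]$ in boundedly many points per dyadic block, which is what collapses the $k$-sum to the $S_\rho$-sum.

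Second, I would construct $\mathfrak{M}_N$ from the Hardy--Littlewood circle method. On the minor arcs, Weyl's inequality gives the power saving $|m_N^Q(\theta)|\le C N^{-\delta}$, and it is exactly this gain that renders the lacunary sum $\sum_{N\in S_\rho}$ convergent. On the major arcs near $\theta=2\pi a/q$ with $q\le N^{\tau}$, the sum factorises as a normalised Gauss sum times a smooth profile $v_N$ of width comparable to the major-arc scale, so the natural model is
$$\mathfrak{M}_N(\theta)=\sum_{q\le N^{\tau}}\ \sum_{\substack{a=1\\(a,q)=1}}^{q}\frac{\mathfrak{G}(a,q)}{q}\,v_N\!\Big(\theta-\tfrac{2\pi a}{q}\Big).$$
Stationary phase on the major arcs then yields the polynomial analogue of Lemma \ref{maxStrong}, namely $\sup_{\theta}\sum_{N\in S_\rho}|m_N^Q(\theta)-\mathfrak{M}_N(\theta)|^2\le C_\rho<\infty$, which disposes of the first sum after integrating against $\sigma_g$. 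The telescoping sum is organised as in \eqref{Mg}: each arc of $\mathfrak{M}_N$ is a modulated, rescaled copy of the linear indicator, so the maximal inequality of Lemma \ref{Mshift} applies termwise while the singular-series bound on the coefficients $\mathfrak{G}(a,q)/q$ keeps the superposition convergent. Combining (i) and (ii) bounds the left side of \eqref{BPPET:OscP} by $C_\rho\sqrt K\,\|g\|_{\ell^2(\Z)}$, so one may take $C(K)=\sqrt K$ (any $o(K)$ bound would do).

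For the prime averages \eqref{BPPET:OscPP} I would run the identical scheme with the exponential sum $\frac{1}{\pi_N}\sum_{p\le N}e^{iQ(p)\theta}$. Vaughan's identity combined with Vinogradov's method supplies the same major/minor-arc dichotomy; the minor-arc estimate now carries logarithmic losses but still beats a power of $N$, and on the major arcs the Gauss sums acquire the arithmetic factor $\mu(q)/\phi(q)$. The same two-part estimate results, so both \eqref{BPPET:OscP} and \eqref{BPPET:OscPP} hold with $C(K)=\sqrt K$. Feeding these into Lemma \ref{Corner} (whose hypothesis $C_K/K\to 0$ is then satisfied) gives almost-everywhere convergence on $\ell^2(\Z)$, and Propositions \ref{CalderonP}--\ref{CalderonP2} transfer this to an arbitrary system.

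The main obstacle is the circle-method input, and within it two points require genuine work: the uniform-in-$\theta$ summability $\sum_{N\in S_\rho}|m_N^Q-\mathfrak{M}_N|^2\le C_\rho$, which relies on the Weyl (resp.\ Vinogradov) minor-arc bounds and on controlling how the major-arc model drifts across the lacunary scales; and the telescoping of the nonlinear models $\mathfrak{M}_N$, which is \emph{not} a nested family of indicators as in the linear case and must be reassembled arc by arc. Once these two estimates are secured, the remainder is a faithful transcription of the argument already carried out for Theorem \ref{INeq2}.
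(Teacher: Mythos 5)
You should be aware that the paper itself does not prove Lemma \ref{POsc}: it states only that the proof ``is based essentially on the Hardy--Littlewood circle method'' and refers the reader to \cite{B2}, \cite{B3}, \cite[Lemmas 4 and 5]{Nair} and \cite{Thouvenot}. Your proposal follows exactly the route those references take --- spectral transfer to the Weyl-sum multiplier, major/minor arc dichotomy, Weyl (resp.\ Vaughan--Vinogradov) bounds on the minor arcs, a major-arc model, and a telescoping estimate --- so in spirit you and the paper coincide; the difference is that you sketch the circle-method argument where the paper simply cites it, and the two estimates you yourself flag as ``requiring genuine work'' are precisely the content of the cited results. To that extent the proposal is an outline of the intended proof, not a proof.

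There is, however, one concrete overclaim that would not survive being made precise. You assert that the telescoping step (your point (ii)) goes through because ``the singular-series bound on the coefficients keeps the superposition convergent,'' and you conclude a $K$-independent bound on the sum of squares, hence $C(K)=\sqrt{K}$. In the linear case (Theorem \ref{INeq2}) the telescoping collapses because the complete exponential sums $G(a,q)=\sum_{r=1}^{q}e^{2\pi i a r/q}$ vanish for every $q>1$, so the model is a single \emph{nested} family of indicators and $\sum_k\|g_{N_k}-g_{N_{k+1}}\|^2$ telescopes as in \eqref{Mg}. For a polynomial of degree $d\ge 2$ the Gauss sums do not vanish, only satisfy $|G(a,q)/q|\lesssim q^{-1/d}$, and the total $\ell^2$-mass of the arc superposition,
\begin{equation*}
\sum_{q}\ \sum_{\substack{1\le a\le q\\ (a,q)=1}}\Big|\frac{G(a,q)}{q}\Big|^{2}\ \gtrsim\ \sum_{q}\phi(q)\,q^{-2/d},
\end{equation*}
diverges; so a termwise appeal to Lemma \ref{Mshift} together with summation of the coefficients, which is what your sentence amounts to, cannot work. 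Bourgain's actual argument in \cite{B2}, \cite{B3} (and Nair's in \cite{Nair}) instead splits the denominators relative to $K$: arcs with $q\le Q_\epsilon$ contribute $O_\epsilon(1)$ to the sum of squares, while arcs with $q>Q_\epsilon$ contribute at most $\epsilon K$, and optimizing in $\epsilon$ yields only $C(K)=o(K)$. This is exactly why the lemma is stated with an unspecified $C(K)$ satisfying $C(K)/K\to 0$ rather than with $\sqrt{K}$: the $K$-uniform square-sum bound you claim is strictly stronger than what the cited circle-method arguments provide. The overclaim is harmless for the application --- Lemma \ref{Corner} only needs $o(K)$ --- but as written your step (ii) is unjustified, and its justification is the heart of the matter rather than a transcription of the linear case.
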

The second lemma is an extension of the Hardy-Littlewood maximal inequality (Lemma \ref{HL}), we state it as follows.

\begin{lem}\label{POmax}Let $r>1$ and $g \in \ell^r(\Z)$.  Then, for any non-constant polynomial $Q$ mapping natural numbers to themselves, we have
	\begin{align}\label{BPPET:maxP}
	\Big\|\sup_{ N \geq 1 }\Big|
	\frac{1}{N}\sum_{n=1}^{N}g(x+Q(n))\Big|\Big\|_{\ell^r(\Z)}
	\nonumber \\ \leq C \big\|g\|_{\ell^r(\Z)},
	\end{align}		
	and 
	\begin{align}\label{BPPET:maxPP}
	\Big\|\sup_{ N \geq 1 }\Big|
	\frac{1}{\pi_N}\sum_{\overset{p \leq N}{p ~~\textrm{prime}}}g(x+Q(p))\Big|\Big\|_{\ell^r(\Z)}
	\nonumber \\ \leq C\big\|g\|_{\ell^r(\Z)},
	\end{align}
	where $C$ is an absolute constant. 	
\end{lem}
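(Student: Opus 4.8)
The plan is to realize each average as a convolution operator on $\Z$ and to control its multiplier by the classical Hardy--Littlewood multiplier already tamed in Lemma \ref{HL} and Lemma \ref{Mshift}. Writing $A_N^Q g(x)=\frac1N\sum_{n=1}^N g(x+Q(n))$, the spectral isomorphism $\cF$ identifies $A_N^Q$ with multiplication by the Weyl sum $m_N(\theta)=\frac1N\sum_{n=1}^N e^{iQ(n)\theta}$, and likewise the prime average corresponds to $m_N^{\mathrm{pr}}(\theta)=\frac1{\pi_N}\sum_{p\le N}e^{iQ(p)\theta}$. First I would reduce the full supremum over $N\ge 1$ to a lacunary supremum over $N\in S_\rho$ plus an oscillation term: for $N_k\le N<N_{k+1}$ one has the pointwise bound $|A_N^Q g|\le \sup_{N\in S_\rho}|A_N^Q g|+\sup_k\sup_{N_k\le N\le N_{k+1}}|A_N^Q g-A_{N_{k+1}}^Q g|$, where the oscillation term is exactly what Lemma \ref{POsc} estimates, while the lacunary maximal operator will be handled on the Fourier side.

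The heart of the matter is the $\ell^2$ estimate, which I would obtain by the Hardy--Littlewood circle method. Decompose $[-\pi,\pi)$ into major arcs around rationals $a/q$ with $q$ small and the complementary minor arcs. On the minor arcs Weyl's inequality gives $|m_N(\theta)|\le C N^{-\delta}$ with $\delta=\delta(\deg Q)>0$, so that $\sum_{N\in S_\rho}\|m_N\|^2$ converges against counting measure (pulled back by $\cF$), exactly as the geometric decay in Lemma \ref{maxStrong} made the corresponding sum converge in the linear case. On the major arcs I would approximate $m_N(a/q+\beta)$ by a product of a complete normalized Gauss sum (a bounded Ramanujan-type factor) and a smooth oscillatory integral $I_N(\beta)$ that, after rescaling, is comparable to the Fej\'er/Dirichlet multiplier $\frac1N\sum_{n=1}^N e^{in\theta}$ studied in Lemma \ref{maxStrong}; comparison with that multiplier and with the Hardy--Littlewood maximal multiplier of Lemma \ref{Mshift} then closes the major-arc estimate, and summing over the arcs transfers the bound to $\ell^2(\Z)$.

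To pass from $\ell^2$ to $\ell^r$ for every $r>1$ I would interpolate. Each $A_N^Q$ is a nonnegative average, hence $\|A_N^Q g\|_{\ell^\infty}\le\|g\|_{\ell^\infty}$, so the sublinear maximal operator is trivially bounded on $\ell^\infty$; interpolating with the $\ell^2$ bound yields the estimate for $r\ge 2$, and the range $1<r<2$ follows from the Ionescu--Wainger multiplier theorem applied to the major-arc pieces (or, alternatively, from Bourgain's logarithmic device, as in \cite{B1}). This establishes \eqref{BPPET:maxP}, and the identical scheme will give \eqref{BPPET:maxPP} once the prime exponential sums are controlled.

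The main obstacle is precisely the prime case. The minor-arc bound for $m_N^{\mathrm{pr}}$ requires Vinogradov's method for exponential sums over primes, and the major-arc analysis requires the Siegel--Walfisz theorem to replace the von Mangoldt weight by its expected main term uniformly over progressions of small modulus. This is the delicate step: as noted after \eqref{NP}, Wierdl's original treatment has a gap here, so I would follow the corrected arguments of Mirek--Trojan \cite{MirekT} and Cuny--Weber \cite{Cuny-W} (and the polynomial-in-primes framework of Nair \cite{Nair}) to secure the required uniform decay of $m_N^{\mathrm{pr}}$ on the minor arcs, after which the same square-function and interpolation machinery as in the polynomial case applies verbatim.
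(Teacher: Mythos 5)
You should first be aware that the paper does not actually prove Lemma \ref{POmax}: it states that the proof ``is based essentially on the Hardy--Littlewood circle method'' and defers entirely to \cite{B2}, \cite{B3}, \cite[Lemmas 4 and 5]{Nair} and \cite{Thouvenot}. Your outline --- Fourier transfer to the Weyl multiplier $m_N(\theta)=\frac1N\sum_{n\le N}e^{iQ(n)\theta}$, Weyl-inequality decay on minor arcs summed over the lacunary set $S_\rho$, Gauss-sum-times-oscillatory-integral approximation on major arcs compared with the Dirichlet/Fej\'er multiplier of Lemma \ref{maxStrong}, interpolation with the trivial $\ell^\infty$ bound for $r\ge 2$ and Bourgain's logarithmic device for $1<r<2$, and Vinogradov plus Siegel--Walfisz in the prime case, following \cite{Nair}, \cite{MirekT}, \cite{Cuny-W} --- is precisely the content of those references, so in substance you are reconstructing the argument the paper invokes rather than proposing a different route.

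There is, however, one step in your reduction that fails as written. You dominate the full maximal function by the lacunary one plus the oscillation term $\sup_k\sup_{N_k\le N\le N_{k+1}}\big|A_N^Qg-A_{N_{k+1}}^Qg\big|$, where $A_N^Qg(x)=\frac1N\sum_{n=1}^Ng(x+Q(n))$, and you assert that this term ``is exactly what Lemma \ref{POsc} estimates''. It is not. Lemma \ref{POsc} bounds the finite sums $\sum_{k=1}^{K}\big\|\sup_{N_k\le N\le N_{k+1}}|A_N^Qg-A_{N_{k+1}}^Qg|\big\|_{\ell^2(\Z)}$ by $C_\rho C(K)\|g\|_{\ell^2(\Z)}$ with only $C(K)/K\to 0$, and such a bound cannot control the norm of a supremum over \emph{all} $k$: if $X_k$ have disjoint supports and $\|X_k\|_{\ell^2}=k^{-1/2}$, the partial sums of the norms are $O(\sqrt{K})$, yet $\|\sup_k X_k\|_{\ell^2}^2=\sum_k k^{-1}=\infty$. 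Oscillation inequalities of this type are engineered to yield almost everywhere convergence (via the corner-stone Lemma \ref{Corner}), not maximal inequalities. Fortunately the step is superfluous: since the averaging kernels are nonnegative, for $[\rho^m]\le N\le[\rho^{m+1}]$ one has pointwise $\frac1N\sum_{n=1}^{N}|g|(x+Q(n))\le\frac{[\rho^{m+1}]}{[\rho^m]}\cdot\frac1{[\rho^{m+1}]}\sum_{n=1}^{[\rho^{m+1}]}|g|(x+Q(n))$, so the full maximal operator is dominated by a constant $C_\rho$ times the lacunary maximal operator applied to $|g|$, and your circle-method treatment of the lacunary supremum (together with the interpolation and the prime-case input from \cite{Nair}, \cite{MirekT}, \cite{Cuny-W}) then carries the whole proof.
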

The proof of the Lemma \ref{POsc} and Lemma \ref{POmax} is based essentially on the Hardy-Littlewood circle method. For their proof, we refer to \cite{B2}, \cite{B3}, \cite[Lemmas 4 and 5]{Nair} and the expository article \cite{Thouvenot}.

Now, we proceed to the proof of Theorem \ref{BPPET}. Applying the same arguments as in the proof of BBET, we obtain, for any $f,g \in \ell^2(\Z)$,
\begin{align}\label{Bbppequa:1}
\frac1{N}\sum_{n=1}^{N} f(x+P(n))g(x-P(n))\nonumber\\ =
\int_{-\pi}^{\pi} \widehat{f}(\theta) \Bigg(\frac1{N}\sum_{n=1}^{N}g(x-P(n))e^{-i(x-P(n))\theta}\Bigg)
e^{2 i x\theta} d\theta.
\end{align}
Whence
\begin{gather}\label{Bbpoeq1}
 \sup_{\overset{N_k \leq N \leq N_{k+1}}{N\in S_\rho} }\Bigg|\frac1{N}\sum_{n=1}^{N} f(x+P(n))g(x-P(n))\nonumber \\-\frac1{N_{k+1}}\sum_{n=1}^{N_{k+1}} f(x+P(n))g(x-P(n))\Bigg|\nonumber\\
\leq 
\int_{-\pi}^{\pi} \big|\widehat{f}(\theta)\big| \sup_{\overset{N_k \leq N \leq N_{k+1}}{N\in S_\rho} }
\Bigg|\frac1{N}\sum_{n=1}^{N}g_{\theta}(x-P(n))-
\frac1{N_k}\sum_{n=1}^{N_k}g_{\theta}(x-P(n))\Bigg|
d\theta,
\end{gather}
where  $g_{\theta}(x)=g(x)e^{i x \theta},  \forall x \in \Z.$ 

Now, let $K \geq 1$ and for each $k=1,2,\cdots,K$, let $h_k \in \ell^2(\Z)$ such that 
$\big\|h_k\big\|_{\ell^2(\Z)}=1.$ Then, by applying Lemma \ref{BPPET:OscP} combined with Cauchy-Schwarz inequality, we get 
\begin{gather}\label{BPPET:OscP2}
\sum_{k=1}^{K}\Bigg|\Big\langle\sup_{\overset{N_k \leq N \leq N_{k+1}}{N\in S_\rho} }\Big|
\frac{1}{N}\sum_{n=1}^{N}g(x-P(n))\nonumber\\-\frac{1}{N_{k+1}}\sum_{n=1}^{N_{k+1}}g(x-P(n))\Big|\overline{h_k(x)}\Big\rangle_{\ell^2(\Z)}\Bigg|
\nonumber \\ 
\leq \int_{-\pi}^{\pi}  \big|\widehat{f}(\theta)\big| \Big\langle\sup_{\overset{N_k \leq N \leq N_{k+1}}{N\in S_\rho} }
\Bigg|\frac1{N}\sum_{n=1}^{N}g_{\theta}(x-P(n))-\nonumber \\
\frac1{N_k}\sum_{n=1}^{N_k}g_{\theta}(x-P(n))\Bigg||h_k(x)|\Big\rangle
d\theta,\nonumber\\
\leq C_\rho C(K) \big\|f\|_{\ell^2(\Z)} \big\|g\|_{\ell^2(\Z)}.
\end{gather}	
  
This gives
\begin{gather}\label{BPPET:OscP3}
\sum_{k=1}^{K}\Bigg\|\sup_{\overset{N_k \leq N \leq N_{k+1}}{N\in S_\rho} }\Big|
\frac{1}{N}\sum_{n=1}^{N}g(x-P(n))-\frac{1}{N_{k+1}}\sum_{n=1}^{N_{k+1}}g(x-P(n))\big|\Bigg\|_{\ell^2(\Z)}
\nonumber \\ 
\leq C_\rho C(K) \big\|f\|_{\ell^2(\Z)} \big\|g\|_{\ell^2(\Z)},
\end{gather}
Thanks to Riesz representation theorem. We thus get that the convergence almost every holds for $F, G\in L^\infty(X,\mu)$. Applying the same machinery, we obtain 
the proof of bilinear ergodic theorem along polynomials in primes.  To finish  the proof of Theorem \ref{BPPET}, we need to establish Theorem \ref{MPPET} 
\section{Proof of Theorem \ref{MPPET}}
Let us denote the bilinear maximal function by
$$M(f,g)=\sup_{ N  \geq 1}\Bigg|\frac1{N}\sum_{n=1}^{N} f(x+P(n))g(x-P(n))
\Bigg|.$$ 
Therefore, obviously, $M$ maps $\ell^\infty(\Z) \times \ell^\infty(\Z)$ to $\ell^\infty(\Z)$. We thus need to see that $M$ maps $\ell^1(\Z)  \times \ell^\infty(\Z) $ to $\ell^1(\Z)$. For that observe that if $f \in \ell^1(\Z), 
g \in \ell^r(\Z), r>1.$ Then, by applying the same reasoning as before combined with H\"{o}lder inequality, we have
\begin{gather}\label{MBIEq}
\sup_{ N \geq 1}\Bigg|
\frac{1}{N}\sum_{n=1}^{N}f(x+P(n))g(x-P(n))\Bigg|^r
\nonumber \\ 
\leq \big\|f\big\|_{\ell^1(\Z)}^r \Big(\int_{0}^{1}   
\sup_{ N \geq 1}\Bigg|\frac1{N}\sum_{n=1}^{N}g_{\theta}(x-P(n))\Bigg|
d\theta\Big)^r,\nonumber\\
\leq C_r \big\|f\|_{\ell^1(\Z)} \int_{0}^{1}   
\sup_{ N \geq 1}\Bigg|\frac1{N}\sum_{n=1}^{N}g_{\theta}(x-P(n))\Bigg|^r
d\theta.
\end{gather}
Therefore, by the maximal inequality, we obtain 
\begin{gather}\label{MBIEq2}
\Bigg\|\sup_{ N \geq 1}\Big|
\frac{1}{N}\sum_{n=1}^{N}f(x+P(n))g(x-P(n))\Big|\Bigg\|_r \nonumber\\
\leq C_r \big\|f\|_{\ell^1(\Z)} \big\|g\|_{\ell^r(\Z)}.
\end{gather}
The case $r=+\infty$ can be handled in the same manner. To finish the proof, we apply the bilinear interpolation.

For the general case, exploiting the fact that spectral type of the shift map on $\Z$ has a simple spectrum (see \cite[Theorem 4.2]{Be}), it can be seeing that the rotated version of Lemma \ref{POsc} and Lemma \ref{POmax} holds, that is, for any $\theta \in [-\pi,\pi)$, for any polynomial $R(n)$ taking integer values, for any $f,g \in \ell^2(\Z)$, we have
 \begin{align}\label{BPPET:OscPR}
 \sum_{k=1}^{K}\Big\|\sup_{\overset{N_k \leq N \leq N_{k+1}}{N\in S_\rho} }\Big|
 \frac{1}{N}\sum_{n=1}^{N}e^{iR(n)\theta}g(x+Q(n))-\nonumber \\\frac{1}{N_{k+1}}\sum_{n=1}^{N_{k+1}}e^{iR(n)\theta}g(x+Q(n))\Big|\Big\|_{\ell^2(\Z)}
 \nonumber \\ \leq C_\rho C(K) \big\|g\|_{\ell^2(\Z)},
 \end{align}		
 and 
 \begin{align}\label{BPPET:OscPPR}
 \sum_{k=1}^{K}\Big\|\sup_{\overset{N_k \leq N \leq N_{k+1}}{N\in S_\rho} }\Big|
 \frac{1}{\pi_N}\sum_{\overset{p \leq N}{p ~~\textrm{prime}}} e^{iR(p)\theta} g(x+Q(p))-\nonumber \\\frac{1}{\pi_{N_{k+1}}}\sum_{\overset{p \leq N_{k+1}}{p ~~\textrm{prime}}}e^{iR(p)\theta}g(x+Q(n))\Big|\Big\|_{\ell^2(\Z)}
 \nonumber \\ \leq C_\rho C(K) \big\|g\|_{\ell^2(\Z)},
 \end{align}
 where $C_\rho$ is a constant depending only on $\rho$ and 
 $ \ds \frac{C(K)}{K} \longrightarrow 0$ as $K \rightarrow +\infty.$
\begin{rem}A careful bilinear interpolation gives that the convergence almost everywhere holds for $r,r'\geq 1$ such that $1 \leq \frac1{r}+\frac{1}{r'}<\frac32.$ We notice that since the Fourier transform play a role of spectral isomorphism the range of $\frac1{r}+\frac{1}{r'}$ can be related to inequalities in Fourier analysis as established by Beckner \cite{Beckner}, many, the Young's inequalities on convolutions.  We hope also that this his direction will be explored in future.
	
\end{rem} 
\begin{ques} $\quad$
\begin{enumerate}[label=\Alph*--]
\item Our investigation leads us to conjecture that for any $S,R$ in the weak-closure of a given measure-preserving transformation  $T$, for any $f \in L^r(X,\mu), g \in L^{r'}(X)$ such that $ 1 \leq \frac{1}{r}+\frac{1}{r'} <\frac32$, the bilinear ergodic average 
$$\frac{1}{N}\sum_{n=1}^{N}f(S^nx)g(R^nx),$$
converge almost everywhere.  
\item We ask also whether the bilinear maximal inequality and the convergence almost everywhere  for the smooth class of functions holds, that is, for any $\phi,\psi$ two smooth functions on real line, can one prove or disprove that for a given measure-preserving transformation  $T$, for any $f \in L^r(X,\mu), g \in L^{r'}(X)$ such that $1 \leq \frac{1}{r}+\frac{1}{r'} < \frac32$, the bilinear average 
$$\frac{1}{N}\sum_{n=1}^{N}f(T^{[\phi(n)]}x) g(T^{[\psi(n)}]x),$$
converge almost everywhere, where as customary, $[x]$ is the integer part of $x$. 
\end{enumerate}   
\end{ques}
 
\begin{thank}
This  work was done while the author was in delegation at the CNRS, DR-19, and during a visit to the university of Sciences and Technology of China, Hefei, Anhui province, whose support and hospitality are gratefully acknowledged. The author would like to thanks also Karma Dajani and Mahesh Nerurkar for fruitful discussions on the subjects.  He is also thankful to Rutgers university  where the paper was
revised, for the invitation and hospitality.	
\end{thank}

\end{document}